\def\uwave{\bgroup \markoverwith{\lower3.5\p@\hbox{\sixly \textcolor{red}{\char58}}}\ULon}
\font\sixly=lasy6 
\newenvironment{red}{\relax\color{red}}{\relax}
\newenvironment{blue}{\relax\color{blue}}{\hspace*{.5ex}\relax}
\newcommand{\ber}{\begin{red}}
	\newcommand{\er}{\end{red}}
\newcommand{\beb}{\begin{blue}}
	\newcommand{\eb}{\end{blue}}
\newcounter{sarrow}
\theoremstyle{plain}
\newtheorem{lemma}{Lemma}
\newtheorem{prop}[lemma]{Proposition}
\newtheorem{coro}[lemma]{Corollary}
\newtheorem{theo}[lemma]{Theorem}
\newtheorem{rema}[lemma]{Remark}
\newtheorem{lemm}[lemma]{Lemma}
\newtheorem{conj}[lemma]{Conjecture}
\theoremstyle{definition}
\newtheorem{defi}[lemma]{Definition}
\newtheorem{thm-Intro}{Theorem} 
\newtheorem{cor-Intro}{Corollary} 
\numberwithin{equation}{section}
\newcommand{\Hol}{\textup{Hol}}
\keywords{Non-measure hyperbolicity, K3 surfaces, Enriques surfaces}
\begin{document}
	\title[Non-measure hyperbolicity of complex K3 surfaces]{Non-measure hyperbolicity of complex K3 surfaces}
	
	\author{Gunhee Cho}
\address{Department of Mathematics\\
	University of California, Santa Barbara\\
South Hall, Room 6607\\
Santa Barbara, CA 93106.}
\email{gunhee.cho@math.ucsb.edu}

	\author{David R. Morrison}
\address{Department of Mathematics\\
	University of California, Santa Barbara\\
South Hall, Room 6607\\
Santa Barbara, CA 93106.}
\email{drm@math.ucsb.edu }
	
	\begin{abstract} 
We show that the non-measure hyperbolicity of K3 surfaces---which M. Green and P. Griffiths verified for certain cases in 1980---holds for  all K3 surfaces. As a byproduct, we prove the non-measure hyperbolicity of any Hilbert schemes of points on K3 surfaces. We also obtain a new proof of the non-measure hyperbolicity of any Enriques surface. 
	\end{abstract}
	
	\maketitle
	
	
	\section{Introduction and results}
	
	The Kobayashi pseudometric on a complex manifold is a generalization of the K\"ahler-Einstein metric with constant negative holomorphic sectional curvature. It corresponds to the Poincar\'e metric in complex hyperbolic space. According to Brody's theorem, the non-existence of entire curves (i.e., non-constant holomorphic maps from $\mathbb{C}^1$ to $M$) on a compact complex manifold $M$ is equivalent to the non-vanishing of the Kobayashi pseudometric \cite{MR0470252}. S. Kobayashi conjectured that all compact Calabi-Yau manifolds have a vanishing Kobayashi pseudometric \cite{MR414940}. Numerous studies have focused on demonstrating the non-existence of entire curves based on Brody's work (for example, \cite{MR0726433,MR3209357,MR3644248,MR4124989, MR3649666}). For a comprehensive introduction to the subject, refer to the cited sources \cite{MR1492539,MR3524135}. Notable problems in this area include the Green-Griffiths and Lang conjectures, which involve the Demailly-Semple vector bundle approach for hypersurfaces of high degree \cite{MR0828820,MR609557}. Another method involves applying deformation to hyper-K\"ahler manifolds, including all complex K3 surfaces, as presented by Kamenova, Lu, and Verbitsky \cite{MR3263959}.
	
A more challenging problem than the vanishing Kobayashi pseudometric is the vanishing Kobayashi-Eisenman pseudovolume. This is because the vanishing Kobayashi-Eisenman pseudovolume implies the vanishing Kobayashi pseudometric. However, the non-existence of entire curves cannot be directly applied, necessitating a new approach. In the case of the Demaily-Semple vector bundle approach, there have been intermittent attempts to extend the domain of entire curves to $\mathbb{C}^k$ instead of $\mathbb{C}^1$, in comparison to the study of entire curves (see, for example, \cite{MR2818709}).

It is also worth noting that the following version of the Kobayashi
conjecture \cite{MR414940} is well-known in the classical literature.
Recent updates on this conjecture can be found in \cite{MR2132645} and
\cite{MR4068832}. This conjecture is based on the observation that
measure hyperbolicity holds for any variety of general type over
$\mathbb{C}$. This implication follows from a result of F. Sakai \cite{MR0590433}, and result of Sakai was generalized to all intermediate Kobayashi--Eisenman measures
in the paper by S. Kaliman \cite{MR1722804} (also see the definition of measure hyperbolicity in
Definition~\ref{def:measure-nonhyperbolicity} below).

	\begin{conj}\label{conjecture:Kobayashi}
		An $n$-dimensional algebraic variety $X$ over $\mathbb{C}$ is measure hyperbolic if and only if $X$ is of general type. 
	\end{conj}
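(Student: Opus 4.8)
The plan is to split the biconditional into two implications of very different character, and to be candid that only one of them is currently within reach. The \emph{easy} direction---that a variety of general type is measure hyperbolic---is the one already supplied to us by the results quoted above, namely Sakai \cite{MR0590433} and its extension to all intermediate Kobayashi--Eisenman measures by Kaliman \cite{MR1722804}. I would reproduce its mechanism as follows. If $X$ is of general type then $h^0(X, mK_X) \sim c\, m^n$ for $m \gg 0$, so the $m$-canonical system is birational onto its image away from a proper subvariety $B \subset X$. From pluricanonical sections one manufactures a pseudovolume form $\mu$, given locally by $|\sigma|^{2/m}$ for a frame $\sigma$ of $mK_X$, whose Ricci form is negative in the generalized sense appropriate to an intrinsic measure. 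The volume-decreasing form of the Ahlfors--Schwarz lemma---applied to holomorphic maps from the unit polydisc---then gives $\Psi_X \ge c\,\mu$ on $X \setminus B$, and since $B$ is a proper subvariety, $\Psi_X$ is positive on a dense open set, so $X$ is measure hyperbolic.

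For the converse I would argue by contrapositive and organize everything around the Kodaira dimension. Assume $\kappa(X) < n$ and consider the Iitaka fibration $f : X \dashrightarrow Y$, whose base has dimension $\kappa(X)$ and whose general fibre $F$ satisfies $\kappa(F) = 0$. The goal is to show that $\Psi_X$ degenerates: concretely, to produce through a general point of $X$ holomorphic maps from the full polydisc $\Delta^n$ (or from $\mathbb{C}^n$) with arbitrarily large Jacobian, so that the infimum defining the Kobayashi--Eisenman pseudovolume vanishes. When $\kappa(X) = -\infty$ this should follow from the (conjecturally uniruled) structure, which floods $X$ with rational curves. Thus the whole weight of the converse rests on understanding the fibres with $\kappa(F) = 0$, and then on transferring any degeneracy found on a fibre up to the total space.

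The hard part will be precisely this $\kappa = 0$ case, where no single structural reduction covers all the examples---abelian varieties, bielliptic surfaces, K3 and Enriques surfaces, and their higher-dimensional analogues. For abelian varieties the uniformization $\mathbb{C}^n \to A$ trivializes $\Psi_X$ at once, but for a general Calabi--Yau fibre no such covering is available and the degenerate-Jacobian maps must be built by hand, for instance by sweeping out the fibre with a moving family of entire curves or of lower-dimensional polydiscs. This is exactly the obstacle that the body of the present paper resolves for K3 surfaces, and I expect any assault on the full conjecture to hinge both on controlling these $\kappa = 0$ fibres uniformly in a family and on the delicate task of checking that the degeneracy created on a fibre propagates holomorphically across the twisted, non-product Iitaka fibration to the total space $X$.
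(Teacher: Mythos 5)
The statement you were asked to prove is labeled a \emph{conjecture} in the paper, and it remains one: the paper contains no proof of it, and neither does your proposal. What the paper actually proves (Theorem~\ref{thm:main}) is a single special case, namely the ``only if'' direction for complex K3 surfaces, a particular class of $\kappa=0$ surfaces. Your treatment of the ``if'' direction (general type $\Rightarrow$ measure hyperbolic) is fine as a summary of what is already known---this is exactly Sakai's theorem \cite{MR0590433}, extended to the intermediate Kobayashi--Eisenman measures by Kaliman \cite{MR1722804}, which the paper itself cites as the evidence making the conjecture plausible---but reproducing a known result is all that half can claim to be.

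The genuine gap is the entire converse, and your own contrapositive program founders on at least three sub-problems, each open in general: (1) for $\kappa=-\infty$ you invoke uniruledness, which is itself conjectural in dimension $\ge 4$; (2) for $0 \le \kappa(X) < n$ you must show that the $\kappa(F)=0$ fibres of the Iitaka fibration carry degenerate Kobayashi--Eisenman measure, but, as you concede, no construction exists for general Calabi--Yau-type fibres---the paper's own argument (density of elliptic K3 surfaces with a section in the moduli space of marked K3 surfaces, vanishing of $\Psi$ on their Weierstrass models via the Weierstrass $\wp$-function, upper semicontinuity of the pseudovolume, and Verbitsky's ergodicity theorem) is tailored to K3 surfaces and is not known to generalize; and (3) even granting degeneracy on the fibres, the transfer of vanishing from fibres to the total space across a non-product Iitaka fibration is left entirely unaddressed. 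Since you candidly flag all of this yourself, the fair verdict is that your proposal is an accurate description of the state of the art and of where the difficulty lies, but it is not a proof, and the statement is not provable by the methods of this paper---the paper only verifies one instance of it.
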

	
Complex K3 surfaces are a significant class of compact complex surfaces
that have been extensively studied in both differential geometry and
algebraic geometry (see, for example, \cite{MR0707352, MR0728142} and
the references therein).  M. Green and P. Griffiths \cite{MR609557}
confirmed Conjecture~\ref{conjecture:Kobayashi} for certain classes of
complex algebraic K3 surfaces $X$,  including 2-sheeted coverings $X \rightarrow
\mathbb{P}^2$ that are branched over a smooth curve of degree six, and
non-degenerate smooth surfaces $X \subset \mathbb{P}^3$ (respectively $X
\subset \mathbb{P}^4$) with degrees 4 (respectively 6) and geometric
genus $p_g \neq 0$. In those three cases, they were able to show that
the complex algebraic K3 surface is rationally swept out by elliptic curves.
It is now known that {\em all} complex algebraic K3 surfaces have this property
\cite[Corollary 13.2.2]{MR3586372}.  It then follows (from
\cite{MR609557} or \cite{MR414940}) that 
Conjecture~\ref{conjecture:Kobayashi} holds for all complex algebraic K3
surfaces. 
	
 In this paper, we verify conjecture~\ref{conjecture:Kobayashi} for all
complex K3 surfaces:

\begin{theo}[Corollary~\ref{cor:vanishing-k3}]\label{thm:main}
Every complex K3 surface $X$ is non-measure hyperbolic.
\end{theo}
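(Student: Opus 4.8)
The plan is to split all complex K3 surfaces into two overlapping regimes and to treat the Kobayashi--Eisenman pseudovolume through a single deformation-theoretic mechanism. Writing $n=2$, recall that a K3 surface $X$ is non-measure hyperbolic precisely when its pseudovolume density $\Psi_X$ vanishes identically, equivalently when the total Kobayashi--Eisenman volume $f([X]):=\int_X \Psi_X$ is zero (Definition~\ref{def:measure-nonhyperbolicity}); note $f\ge 0$ and that $f$ depends only on the biholomorphism class. The two regimes are: (a) projective K3 surfaces, for which non-measure hyperbolicity is essentially known; and (b) arbitrary, in particular non-projective, K3 surfaces, which in the generic case carry no curves at all and so must be reached by deformation. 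The engine moving from (a) to (b) is the ergodicity of the mapping class group $\Gamma$ acting on the Teichm\"uller space of K3 surfaces, exactly as in the treatment of the Kobayashi pseudometric by Kamenova, Lu and Verbitsky \cite{MR3263959}; the new point is to run this machine for the pseudovolume rather than the pseudometric.

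First (the seed, regime (a)) I would record a single projective K3 surface $X_1$ with $f([X_1])=0$. By \cite[Corollary 13.2.2]{MR3586372} every projective K3 surface is rationally swept out by elliptic curves, and for such a surface one constructs, through a general point, holomorphic maps $B^2\to X$ of arbitrarily large Jacobian: one disk factor is pushed into an elliptic curve of the sweeping family (uniformized by $\mathbb{C}$, hence admitting disks of unbounded derivative), and the transverse factor is pushed along the rational direction of the sweep (again uniformized by $\mathbb{C}$ away from a point). This is precisely the mechanism of Green and Griffiths \cite{MR609557}, and it forces $\Psi_X\equiv 0$, so $f([X_1])=0$. In fact this already yields regime (a) for \emph{all} projective K3 surfaces, settling Conjecture~\ref{conjecture:Kobayashi} there; I only need one such surface as the seed for what follows.

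Second (semicontinuity and ergodicity, regime (b)) the key lemma is that $f$ is upper semicontinuous on Teichm\"uller space. Pointwise upper semicontinuity of the density $\Psi$ on the total space of a local Kuranishi family $\pi:\mathcal{X}\to B$ is the pseudovolume analogue of the standard semicontinuity of the Kobayashi--Royden pseudonorm; integrating fiberwise and invoking a reverse Fatou argument---legitimate because a smooth family of compact surfaces carries a fixed smooth volume form on $\mathcal{X}$ dominating every $\Psi_{X_t}$---gives $\limsup_{t\to 0} f([X_t])\le f([X_0])$. Granting this, set $U_\epsilon:=\{\,[X]: f([X])<\epsilon\,\}$. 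By upper semicontinuity $U_\epsilon$ is open, and it is $\Gamma$-invariant and nonempty since it contains the seed $[X_1]$. Now let $X_0$ be any non-projective K3 surface. By Verbitsky's ergodicity theorem, as exploited in \cite{MR3263959}, the $\Gamma$-orbit of $[X_0]$ is dense in Teichm\"uller space; being dense it meets the nonempty open set $U_\epsilon$, so $\gamma\cdot[X_0]\in U_\epsilon$ for some $\gamma\in\Gamma$. Since $f$ is $\Gamma$-invariant, $f([X_0])=f(\gamma\cdot[X_0])<\epsilon$. Letting $\epsilon\to 0$ gives $f([X_0])=0$.

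Combining the two regimes proves Theorem~\ref{thm:main}: projective K3 surfaces are non-measure hyperbolic directly, and every remaining K3 surface, having dense orbit, inherits $f=0$ from the seed via semicontinuity and ergodicity. The non-measure hyperbolicity of Enriques surfaces then follows at once: an Enriques surface is the quotient of a K3 surface by a fixed-point-free involution, so the K3 surface is an \'etale double cover; since a holomorphic map from the simply connected ball lifts to this cover, the two Kobayashi--Eisenman pseudovolumes agree, and vanishing on the K3 surface forces vanishing on the Enriques surface. The main obstacle I anticipate is the upper semicontinuity of $f$. Unlike the pseudometric case, where one works with a pointwise pseudonorm, here one must integrate a merely measurable, a priori only upper semicontinuous, volume density and interchange $\limsup$ with $\int$; making the reverse Fatou step rigorous requires a uniform dominating volume form across the family together with a careful proof that the Kobayashi--Eisenman density is upper semicontinuous on the total space of the deformation, and this is where the real work lies.
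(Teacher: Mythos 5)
Your architecture coincides with the paper's: exhibit a ``seed'' locus of K3 surfaces with vanishing Kobayashi--Eisenman pseudovolume, show the total Eisenman volume is upper semicontinuous under deformation, and use Verbitsky's ergodicity theorem (following Kamenova--Lu--Verbitsky \cite{MR3263959}) to propagate the vanishing to all remaining K3 surfaces, with Enriques surfaces as a corollary. The implementations differ in two places. For the seed you take \emph{all} projective K3 surfaces, citing the Green--Griffiths sweeping mechanism \cite{MR609557} together with \cite[Corollary 13.2.2]{MR3586372}; this is legitimate (the paper's introduction makes exactly this observation), whereas the paper seeds instead with elliptic K3 surfaces with a section and proves their vanishing self-containedly, via the Weierstrass model and explicit maps $\mathbb{C}\times R_X\to\overline{X}$ built from the $\wp$-function (Propositions \ref{prop:birational} and \ref{prop:non-constant-map}). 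Your case division (projective versus non-projective) also differs from the paper's (elliptic-with-section versus not), but both feed into the same criterion of Theorem \ref{theorem:ergodicity}: non-maximal Picard number implies ergodicity. One step you should make explicit is why a non-projective K3 has $\rho\le 19<20$: its N\'eron--Severi group spans a negative semidefinite subspace of the signature-$(1,19)$ space $H^{1,1}_{\mathbb{R}}$, hence has rank at most $19$; the paper's analogous step is that absence of an elliptic fibration with a section forces $\rho<12$.

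The genuine soft spot is the semicontinuity lemma, which you correctly single out as the crux but both leave unproven and invoke in a stronger form than is needed or available. Openness of $U_\epsilon=\{f<\epsilon\}$ at \emph{every} point of Teichm\"uller space requires upper semicontinuity of $f$ everywhere; the tool the paper uses, Zaidenberg's theorem \cite[Theorem 4.4]{MR791317} (Proposition \ref{prop:upper-semicontinuity}), gives the estimate $\Psi_{D_c}\le\pi_c^*\left[(1+\varepsilon)\Psi_{D_0}+\varepsilon|\omega|\right]$ only under the hypothesis that $\Psi_{D_0}$ is \emph{continuous}, so a global openness claim is not available off the shelf (and the jumping phenomenon mentioned in the paper's final remark is a warning that such claims need care). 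Your argument, however, does not need it: it suffices that $U_\epsilon$ contain an open neighborhood of the single seed point, i.e., upper semicontinuity of $f$ at the seed alone, and there the continuity hypothesis is automatic because the seed has $\Psi\equiv 0$. Zaidenberg's inequality then reads $\Psi_{D_c}\le \varepsilon\,\pi_c^*|\omega|$, which integrates over the compact fibers to give $f\le C\varepsilon$ on a neighborhood of the seed, so the dense $\Gamma$-orbit of any non-projective (hence ergodic) K3 surface meets that neighborhood and $\Gamma$-invariance of $f$ finishes the argument; this also removes your reverse-Fatou and dominating-volume-form worries entirely. This localization at the seed is exactly how the paper proceeds (Corollary \ref{cor:upper-semicontinuity-volume} and Proposition \ref{prop:vanishing-ergodic}); with that repair, your proof closes.
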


The key idea is to deform the complex structure of a complex K3 surface
to achieve one with vanishing Kobayashi-Eisenman pseudovolume. To accomplish
this, we consider the moduli space of (``marked'') 
complex K3 surfaces and focus on
the collection of elliptic K3 surfaces with a section, which forms a
dense subset in this moduli space and satisfies the property of
vanishing Kobayashi-Eisenman pseudovolume. Exploiting the upper
semicontinuity of the Kobayashi-Eisenman pseudovolume (and pseudometric)
under deformations of complex structures, we can establish the vanishing
of volume for any complex K3 surface. Recent research by the first
author has explored how the deformations of complex structure can be
utilized to prove Kobayashi hyperbolic embedding through toric geometry \cite{GCJY21}.

An interesting consequence of Theorem~\ref{thm:main} is the vanishing of the Kobayashi pseudovolume on any $n$th punctual Hilbert schemes of a complex K3 surface, which appears to be previously unknown. Our result provides a stronger condition than the Kobayashi non-hyperbolicity for any complex K3 surface and for any $n$th punctual Hilbert schemes of a complex K3 surface, which had been demonstrated by L. Kamenova, S. Lu, and M. Verbitsky \cite{MR3263959}. 

\begin{coro}[Corollary~\ref{cor:vanishing-Hilbertscheme}]\label{Cor:Hilbert-Scheme}
	Every Hilbert schemes of points on K3 surfaces is non-measure hyperbolic.
\end{coro}

Also, Theorem~\ref{thm:main} yields a new proof of the vanishing of the Kobayashi pseudovolume on Enriques surfaces, which had previously been proven in \cite{MR609557}.

This paper is structured as follows:

Section 2 provides the definitions of the Kobayashi pseudometric and
Kobayashi-Eisenman pseudovolume. In Section 3, we discuss complex K3
surfaces and  their moduli space,  and describe the collection of
elliptic K3 surfaces with a section, which forms a dense subset of this
moduli space. Section 4 focuses on the proof of the vanishing
Kobayashi-Eisenman pseudovolume specifically for elliptic K3 surfaces with a section. Section 5 establishes the vanishing of the Kobayashi-Eisenman pseudovolume on any complex K3 surface, leveraging the upper semicontinuity of the Kobayashi-Eisenman pseudovolume. In conclusion, we examines the vanishing properties of the Kobayashi-Eisenman pseudovolume for complex K3 surfaces and any $n$th punctual Hilbert schemes of a complex K3 surface. Additionally, we address the non-measure hyperbolicity of Enriques surfaces. 

	\subsection*{Acknowledgments}
We thank Ariyan Javanpeykar,  Ljudmila Kamenova, and Steven Lu for helpful correspondence. We also thank Mikhail Zaidenberg for comments on upper-semicontinuity of Kobayashi-Eisenman pseudovolume and relevant references. GC is partially supported by Simons Travel funding. DRM is partially
supported by National Science Foundation Grant  \#PHY-2014226.

\section{Preliminaries}

\subsection{Kobayashi--(Royden) pseudometric, Kobayashi-Eisenman pseudovolume}
We always denote $\triangle$ and $\triangle_r$ the unit disk and the
disk of a radius $r$ in $\mathbb{C}$.  Let $M$ be a complex manifold of
dimension $n$. Let $p \in M$. We denote by $T_{p} M$ (resp.\ $T_M$) the
holomorphic tangent space to $M$ at $p$ (resp.\ the holomorphic tangent bundle).

\begin{defi}
	
	(1) The {\em Kobayashi--Royden pseudo-metric} $KR_{X}$ is defined by
	\begin{equation*}
			KR_{X}(u)=\inf_{\phi}\left\{\frac{1}{|\mu|}, \phi_{*}\left(\frac{\partial}{\partial t}\right)=\mu u\right\}
	\end{equation*}
for $u \in T_{X, x}$. Here $\phi$ runs over the set of holomorphic maps from the unit disk $\triangle$ to $X$, such that $\phi(0)=x$.
	
	(2) The {\em Kobayashi--Eisenman pseudo-volume} $\Psi_{X}$ on the $n$-dimensional complex manifold $X$ is the pseudo-volume form whose associated Hermitian pseudo-norm on $\bigwedge^{n} T_{X}$ is defined by
	\begin{equation*}
		\Psi_{X}(\zeta)=\inf_{\phi}\left\{\frac{1}{|\mu|}, \phi_{*}\left(\frac{\partial}{\partial t_{1}} \wedge \ldots \wedge \frac{\partial}{\partial t_{n}}\right)=\mu \zeta\right\},
	\end{equation*}
	for $\zeta \in \bigwedge^{n} T_{X, x}, n=\operatorname{dim} X$. Here $\phi$ runs over the set of holomorphic maps from $\triangle^{n}$ to $X$ such that $\phi(0)=x$.
	
	(3) More generally the Eisenman p-pseudo-volume form $\Psi_{X}^{p}$ is defined as in (2), replacing $n=\operatorname{dim} X$ by any $p$ comprised between 1 and $n$. It is then defined only on p-vectors $\zeta \in \bigwedge^{p} T_{X, x}$ which are decomposable, that is $\zeta=u_{1} \wedge \ldots \wedge u_{p}$.
	
	We clearly have
	
	$$
	\Psi_{X}^{1}=KR_{X}, \Psi_{X}^{n}=\Psi_{X}.
	$$
\end{defi}

	Intuitively, $KR_{X}(p;v)$ measures the maximal radius of one-dimensional disk embedded holomorphically in $X$ in the direction of $v$ (see also~ \cite{GunheeChoJunqingQian20} for the concrete formula for very simple Riemann surfaces). It is only a pseudo-metric since it is possible that $KR_{X}(p;v)=0$ for some non-zero tangent vector. 
The Kobayashi-Royden pseudometric is well defined as a pseudo-metric on arbitrary complex manifolds, and it coincides with the Poincar\'e metric in the case of a complex hyperbolic space. The Kobayashi pseudo-distance $d_{X}$ on $X$ is defined by \[d_{X}(x,y)=\inf_{f_i \in \Hol(\triangle,X)}\left\{\sum_{i=1}^{n} \rho_{\triangle}(a_i,b_i) \right\}, \] where $x=p_0, ... ,p_n =y, f_i(a_i)=p_{i-1}, f_i(b_i)=p_i$ and $\rho_{\triangle}(a,b)$ denotes the Poincar\'e distance between two points $a,b\in \triangle$. Royden proved that the infimum of the arc-length of all piecewise $C^1$-curves with respect to the Kobayashi--Royden metric induces the Kobayashi pseudo-distance $d_X$ \cite{MR0291494}.

The upper semicontinuity of the Kobayashi-Eisenman measures on a variety 
was established as a lemma in \cite{MR0367300} of Eisenman (publishing
under the surname Pelles). One of the results there says that the $k$th
Kobayashi--Eisenman measure is insensible to removing an analytic subset
of codimension k+1 or more \cite{MR1321580}. For the Kobayshi-Royden
pseudometric, this is a result of \cite{MR0425186}. (Also, see \cite{MR1323721, MR1363172}). 

\begin{defi}\label{def:measure-nonhyperbolicity}
A complex manifold $X$ is said to be {\em measure
	hyperbolic} if the Kobayashi--Eisenman pseudo-volume does not vainish on any open set, or equivalently $\Psi_X$ is nonzero almost everywhere.
\end{defi}

The following decreasing property is clear from the definition of Kobayashi-Eisenman volume. 

\begin{lemm}\label{lemma:decreasing}
	Let $\phi : X \rightarrow Y$ be a holomorphic map between two complex manifolds. Then
	\begin{equation*}
		\phi^{*}\Psi_{Y}\leq \Psi_{X}.
	\end{equation*}
	
\end{lemm}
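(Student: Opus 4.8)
The plan is to deduce the inequality directly from the definition of $\Psi$, using the standard observation that post-composition with $\phi$ turns an admissible polydisc map for $X$ into an admissible polydisc map for $Y$. To set things up I would work in the natural setting $n=\dim X=\dim Y$, so that both $\Psi_X=\Psi_X^n$ and $\Psi_Y=\Psi_Y^n$ are genuine pseudo-volume forms and the pullback makes sense: $\phi^*\Psi_Y$ is by definition the pseudo-norm on $\bigwedge^n T_X$ whose value at $x$ on $\zeta\in\bigwedge^n T_{X,x}$ is $\Psi_Y\big(\Lambda^n(d\phi_x)\,\zeta\big)$, where $\Lambda^n(d\phi_x)\colon \bigwedge^n T_{X,x}\to\bigwedge^n T_{Y,\phi(x)}$ denotes the $n$-th exterior power of the differential. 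It therefore suffices to establish, for each $x\in X$ and each $\zeta\in\bigwedge^n T_{X,x}$, the pointwise bound
\begin{equation*}
\Psi_Y\big(\Lambda^n(d\phi_x)\,\zeta\big)\ \le\ \Psi_X(\zeta).
\end{equation*}

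The key step is the composition argument. I would fix such $x$ and $\zeta$ and let $f\colon\triangle^n\to X$ be an arbitrary competitor in the infimum defining $\Psi_X(\zeta)$, that is, a holomorphic map with $f(0)=x$ and $f_*\big(\tfrac{\partial}{\partial t_1}\wedge\cdots\wedge\tfrac{\partial}{\partial t_n}\big)=\mu\,\zeta$ for some $\mu\in\mathbb{C}$. Then $\phi\circ f\colon\triangle^n\to Y$ is holomorphic with $(\phi\circ f)(0)=\phi(x)$, and by the chain rule together with the functoriality of $\Lambda^n$ one computes
\begin{equation*}
(\phi\circ f)_*\Big(\tfrac{\partial}{\partial t_1}\wedge\cdots\wedge\tfrac{\partial}{\partial t_n}\Big)
=\Lambda^n(d\phi_x)\,f_*\Big(\tfrac{\partial}{\partial t_1}\wedge\cdots\wedge\tfrac{\partial}{\partial t_n}\Big)
=\mu\,\Lambda^n(d\phi_x)\,\zeta .
\end{equation*}
Hence $\phi\circ f$ is itself a competitor in the infimum defining $\Psi_Y\big(\Lambda^n(d\phi_x)\,\zeta\big)$, attained with the very same scalar $\mu$, so that $\Psi_Y\big(\Lambda^n(d\phi_x)\,\zeta\big)\le 1/|\mu|$.

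To finish I would simply observe that the competitor $f$ (and with it the scalar $\mu$) was arbitrary, so taking the infimum over all such $f$ gives $\Psi_Y\big(\Lambda^n(d\phi_x)\,\zeta\big)\le\inf_f 1/|\mu|=\Psi_X(\zeta)$, which is the claimed pointwise inequality. The one case needing a separate remark is the degenerate one where $\phi$ is not an immersion at $x$ and $\Lambda^n(d\phi_x)\,\zeta=0$; there the left-hand side is $\Psi_Y(0)=0$ by homogeneity of the pseudo-norm, so the bound holds trivially. I do not expect a genuine obstacle: the entire content is the functoriality of $\Lambda^n$ under composition, precisely mirroring Royden's distance-decreasing argument for the Kobayashi--Royden pseudometric, and the only points demanding care are the correct bookkeeping of the pushforward on top exterior powers and this trivial degenerate case. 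If one instead allows $\dim Y\neq\dim X$, the identical composition argument proves the analogous statement $\phi^*\Psi_Y^{\,p}\le\Psi_X^{\,p}$ for the intermediate Eisenman $p$-pseudo-volumes, with $\zeta$ ranging over decomposable $p$-vectors.
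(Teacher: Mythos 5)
Your proof is correct: the paper offers no argument for this lemma beyond the remark that it is ``clear from the definition,'' and your composition argument (post-composing a competitor polydisc map $f$ for $\Psi_X(\zeta)$ with $\phi$ to obtain a competitor for $\Psi_Y$ at $\Lambda^n(d\phi_x)\zeta$ with the same scalar $\mu$) is exactly the standard justification being left implicit. Your restriction to the equidimensional case and the remark on the degenerate case $\Lambda^n(d\phi_x)\zeta=0$ are appropriate, and the equidimensional setting is indeed the only one in which the paper applies the lemma.
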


Also, we have the following: 

\begin{lemm}\label{lemma:product}
	Let $X_1,X_2$ be complex manifolds. Denote the $i$th projection $pr_i : X_1\times X_2 \rightarrow X_i,i=1,2$. Then
	\begin{equation*}
		\Psi_{X_1\times X_2}\leq pr^{*}_1 \Psi_{X_1}\otimes pr^{*}_2 \Psi_{X_2}.
	\end{equation*}

\begin{proof}
 \cite[Lemma 1.12]{MR2132645}.
\end{proof}
\end{lemm}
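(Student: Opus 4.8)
The plan is to derive the inequality by testing the infimum that defines $\Psi_{X_1\times X_2}$ against the restricted class of \emph{product} maps. First I would record the relevant linear algebra. Writing $n_i=\dim X_i$ and $n=n_1+n_2$, at a point $(x_1,x_2)$ the holomorphic tangent space splits canonically as $T_{(x_1,x_2)}(X_1\times X_2)=T_{x_1}X_1\oplus T_{x_2}X_2$, and correspondingly the top exterior power satisfies the natural isomorphism $\bigwedge^{n}T_{(x_1,x_2)}(X_1\times X_2)\cong \bigwedge^{n_1}T_{x_1}X_1\otimes\bigwedge^{n_2}T_{x_2}X_2$. Under this identification every top vector is decomposable, $\zeta=\zeta_1\wedge\zeta_2$ with $\zeta_i\in\bigwedge^{n_i}T_{x_i}X_i$, and by the definition of the tensor-product volume form $(pr_1^{*}\Psi_{X_1}\otimes pr_2^{*}\Psi_{X_2})(\zeta)=\Psi_{X_1}(\zeta_1)\,\Psi_{X_2}(\zeta_2)$. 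If either factor on the right is infinite the asserted inequality is vacuous, so I would assume both are finite.

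In that case, fix $\varepsilon>0$ and choose holomorphic maps $\phi_1\colon\triangle^{n_1}\to X_1$ and $\phi_2\colon\triangle^{n_2}\to X_2$ with $\phi_i(0)=x_i$ and $\phi_{i*}\bigl(\tfrac{\partial}{\partial t_1}\wedge\cdots\wedge\tfrac{\partial}{\partial t_{n_i}}\bigr)=\mu_i\zeta_i$ for some $\mu_i\neq 0$, such that $1/|\mu_i|\le\Psi_{X_i}(\zeta_i)+\varepsilon$. Identifying $\triangle^{n_1}\times\triangle^{n_2}$ with the polydisk $\triangle^{n}$, I would form the product map $\phi=\phi_1\times\phi_2\colon\triangle^{n}\to X_1\times X_2$, which satisfies $\phi(0)=(x_1,x_2)$. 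The key computation is that the Jacobian of a product map factors through the splitting of the tangent bundle: $\phi_{*}\bigl(\tfrac{\partial}{\partial t_1}\wedge\cdots\wedge\tfrac{\partial}{\partial t_n}\bigr)=\phi_{1*}\bigl(\tfrac{\partial}{\partial t_1}\wedge\cdots\wedge\tfrac{\partial}{\partial t_{n_1}}\bigr)\wedge\phi_{2*}\bigl(\tfrac{\partial}{\partial t_{n_1+1}}\wedge\cdots\wedge\tfrac{\partial}{\partial t_n}\bigr)=\mu_1\mu_2\,(\zeta_1\wedge\zeta_2)=\mu_1\mu_2\,\zeta$. Hence $\phi$ is an admissible competitor in the infimum for $\Psi_{X_1\times X_2}(\zeta)$ with $\mu=\mu_1\mu_2\neq 0$, giving $\Psi_{X_1\times X_2}(\zeta)\le 1/|\mu_1\mu_2|\le(\Psi_{X_1}(\zeta_1)+\varepsilon)(\Psi_{X_2}(\zeta_2)+\varepsilon)$.

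Letting $\varepsilon\to 0$ yields the claimed bound, and since $\zeta$ was an arbitrary top vector this establishes the pointwise inequality of pseudo-volume forms. The only genuine content is the factorization of the Jacobian of the product map, combined with the compatibility of the tensor-product volume form with the decomposition of the top exterior power; the remainder is bookkeeping around the infimum and the edge cases. The step I expect to require the most care is justifying that it is legitimate to restrict from \emph{all} maps $\triangle^{n}\to X_1\times X_2$ to the subclass of product maps: this is precisely what makes the argument give only the inequality $\le$, since enlarging the competitor class to include non-product maps can only decrease the infimum. Indeed, the reverse inequality generally fails, because non-product maps of the polydisk can detect additional hyperbolicity; this is why the statement is an inequality rather than an equality.
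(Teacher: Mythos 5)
Your proof is correct. Note, however, that the paper does not actually prove this lemma internally: its entire ``proof'' is a citation to Voisin's Lemma 1.12 in \cite{MR2132645}. What you have written is the standard direct argument that underlies that citation: identify $\bigwedge^{n}T_{(x_1,x_2)}(X_1\times X_2)$ with $\bigwedge^{n_1}T_{x_1}X_1\otimes\bigwedge^{n_2}T_{x_2}X_2$, test the defining infimum for $\Psi_{X_1\times X_2}$ against product maps $\phi_1\times\phi_2\colon\triangle^{n_1}\times\triangle^{n_2}\to X_1\times X_2$, use the block-diagonal factorization of the Jacobian, and pass to the limit in $\varepsilon$. Your observation about the direction of the inequality is exactly right: restricting to the subclass of product competitors can only increase the infimum, which is why one obtains $\le$ and not equality. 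The one small point you glide over is that the expression $\Psi_{X_1}(\zeta_1)\,\Psi_{X_2}(\zeta_2)$ must be independent of the chosen decomposition $\zeta=\zeta_1\wedge\zeta_2$, since $(\zeta_1,\zeta_2)$ can be replaced by $(\lambda\zeta_1,\lambda^{-1}\zeta_2)$; this is immediate from the degree-one homogeneity $\Psi_{X_i}(\lambda\zeta_i)=|\lambda|\,\Psi_{X_i}(\zeta_i)$ of the pseudo-norms, but it is worth stating. In terms of trade-offs: the paper's citation buys brevity and places the lemma in Voisin's broader framework (where analogous statements for intermediate Eisenman measures are treated), while your version makes the paper self-contained at essentially no cost, since the argument uses nothing beyond the definition of $\Psi_X$.
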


\section{Complex K3 surfaces}
\begin{defi}
	A complex K3 surface is a compact connected complex manifold $X$
of complex dimension $2$ such that the canonical line bundle is trivial
and $H^1(X,\mathcal{O}_X)=0$.  Note that the triviality of the canonical
bundle is equivalent to the existence of a nowhere-vanishing holomorphic 
$2$-form $\omega_X$, and that such a $2$-form is unique up to a
(complex) scalar multiple. 
\end{defi}

In this paper, `K3 surface' shall mean a complex K3 surface in this sense. Not all K3 surfaces are algebraic. 

\subsubsection{Moduli space of K3 surfaces}

There is a 
well-known and well-studied moduli space for K3 surfaces which
is formulated in terms of the {\em periods}\/ of the holomorphic
$2$-form.
There are several versions of this moduli space; the one of greatest
interest to us will be the moduli space of {\em marked} K3 surfaces.
To construct this, we declare a {\em marking} of a complex K3 surface
$X$ to be a choice of isometry $\alpha: H^2(X,\mathbb{Z}) \to L$,
where $L$ is a fixed free ${\mathbb Z}$-module of rank $22$ carrying
a $\mathbb Z$-valued  inner product $\langle\quad,\quad\rangle$
of signature $(3,19)$ such that $\langle v,v\rangle$
is an even integer for each $v\in L$.  (Such a module is known \cite{MR0255476}
to be isometric
to the orthogonal direct sum of two copies of the sign-reversed $E_8$ lattice
plus three copies of the ``hyperbolic plane'' $U$ (the even integral inner
product space whose inner product has matrix
$\begin{bmatrix}0&1\\1&0\end{bmatrix}$ 
in an appropriate basis)).
A marking induces a dual map (again denoted by $\alpha$):
$\alpha:H_2(X,\mathbb{Z}) \to L^*\cong L$.
Two marked K3 surfaces are said to be isomorphic if there is an
isomorphism compatible with the markings; the set of isomorphism classes
of marked K3 surfaces is known to be a complex manifold of complex
dimension 20, although it is not Hausdorff (see \cite[Section
7.2]{MR3586372}).

The moduli space of marked K3 surfaces can also be regarded as the
{\em Teichm\"uller space}\/ of K3 surfaces.  Since autorphisms of
 a K3 surface $M$ always act nontrivially on its second cohomology
\cite[Proposition 15.2.1]{MR3586372}, the
mapping class group 
$$
       \Gamma:=\operatorname{Diff}(M) / \operatorname{Diff}_{0}(M)
       $$ 
(where $\operatorname{Diff}_{0}(M)$ is the connected component of the
diffeomorphism group) can be regarded as the group which permutes
markings.  Thus, if we let Comp denote the space of complex structures
on $M$, equipped with a structure of a Fr\'echet manifold, and we
let the Teichm\"uller space be
$\operatorname{Teich} := \operatorname{Comp} / \operatorname{Diff}_{0}(M)$
then the quotient Comp / Diff $=$ Teich $/ \Gamma$ is the moduli space
of complex structures on $M$, and $\operatorname{Teich}$ is the moduli
space of marked K3 surfaces.

Given a marked K3 surface $X$ with marking $\alpha$, one fixes
a basis $\gamma_1, \dots,
\gamma_{22}$ of $L^*\cong L$
and for each
nonzero holomorphic $2$-form $\omega_X$ on $X$ 
defines the {\em period point}
$$
\pi(X,\alpha) :=(\int_{\alpha^{-1}(\gamma_1)}\omega_X,\dots,\int_{\alpha^{-1}(\gamma_{22})}\omega_X)\in
\mathbb{P}^{21}\
$$
which is a well-defined point in projective space independent of
$\omega_X$
since $\omega_X$ is unique up to a 
nonzero complex multiple.
Each $\pi(X,\alpha)$ lies in the subset
$$ \Omega:=\{ \omega \in \mathbb{P}^{21} | \langle \omega, \omega \rangle =
0\}$$
where we endow $\mathbb{P}^{21}=\mathbb{P}(L\otimes_{\mathbb{Z}} \mathbb{C})$ with
the inner product $\langle\quad,\quad\rangle$ induced from that
of $L$.  
The {\em period map} $(X,\alpha) \mapsto \pi(X,\alpha)$ is known to be 
a local isomorphism (this is the ``local Torelli theorem''), and this is what gives the moduli space the structure
of a complex manifold of complex dimension 20.

\subsection{Elliptic K3 surfaces with section}

An elliptic K3 surface $X$ is a K3 surface which admits a holomorphic
mapping $\pi:X\to\mathbb{P}^1$ whose general fiber $\pi^{-1}(t)$ is
a complex curve of genus one.  We say that $X$ is an elliptic K3 surface
with a section if there is a curve $C_0\subset X$ (called the
``section'') such that $\pi|_{C_0}$ establishes an isomorphism from $C_0$
to $\mathbb{P}^1$.

It is known that a K3 surface $X$ is an elliptic K3 surface with a section 
if and only if there
exists an embedding $U \rightarrow NS(X)$ of the hyperbolic plane into
the N\'eron-Severi group $NS(X)$ of $X$.  (For a marked K3 surface,
NS(X) can be identified with $\pi(X,\alpha)^\perp \cap
\overline{\pi(X,\alpha)}^\perp\cap L$.)

Elliptic K3 surfaces with section are quite common among K3 surfaces,
as shown by the following proposition
(proven in \cite[Remark 14.3.9]{MR3586372} or \cite[Chapter VIII]{MR749574}). 

\begin{prop}\label{prop:dense_subset}
Elliptic K3 surfaces with a section are dense in the moduli space of all
marked  K3 surfaces and also in the moduli spaces $M_d$ of polarized K3
surfaces of fixed degree. 	
\end{prop}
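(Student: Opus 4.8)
The plan is to translate the statement into one about period points and then establish a Noether--Lefschetz type density in the period domain. First I would use that the period map $(X,\alpha)\mapsto\pi(X,\alpha)$ is a local isomorphism (local Torelli) and, by surjectivity of the period map \cite{MR3586372}, identifies the moduli space of marked K3 surfaces with the period domain
\[ D:=\{[\omega]\in\Omega : \langle\omega,\bar\omega\rangle>0\}\subset\mathbb{P}^{21}, \]
which is in particular an open map; hence density in the moduli space is equivalent to density of the corresponding set of period points in $D$. By the criterion recalled above, $(X,\alpha)$ is elliptic with a section exactly when $NS(X)=\pi(X,\alpha)^\perp\cap\overline{\pi(X,\alpha)}^\perp\cap L$ contains a primitive sublattice $N\cong U$. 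So it suffices to prove that
\[ E:=\bigcup_{N} D_N,\qquad D_N:=\{[\omega]\in D : \langle\omega,n\rangle=0 \text{ for all } n\in N\}, \]
is dense in $D$, the union ranging over all primitive sublattices $N\subset L$ with $N\cong U$.

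Next I would make the geometry of $D_N$ explicit. Writing $[\omega]\in D$ as the positive, oriented real $2$-plane $P=\mathrm{span}(\operatorname{Re}\omega,\operatorname{Im}\omega)\subset L\otimes\mathbb{R}$, one has $NS(X)=L\cap P^\perp$ and $D_N=\{P : P\subset N^\perp\otimes\mathbb{R}\}$. Since $N\cong U$ has signature $(1,1)$, the complement $N^\perp\cap L$ has signature $(2,18)$, so $D_N$ is a nonempty complex submanifold of $D$ (namely the period domain of $N^\perp\cap L$) of complex codimension $2$. Because $L$ is even unimodular, the theorem of Eichler--Nikulin on primitive embeddings (see \cite{MR3586372}) shows that all primitive $N\cong U$ lie in a single $O(L)$-orbit; consequently $E=O(L)\cdot D_{N_0}$ for any fixed $N_0\cong U$, where $O(L)$ acts on $D$ through its action on $L$.

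The hard part is the remaining density of $O(L)\cdot D_{N_0}$. The subgroup of $O(3,19)=O(L\otimes\mathbb{R})$ preserving $D_{N_0}$ is the stabilizer $S\cong O(1,1)\times O(2,18)$ of the plane $N_0\otimes\mathbb{R}$, so the translates of $D_{N_0}$ are indexed by the $O(L)$-orbit of $N_0\otimes\mathbb{R}$ in the real Grassmannian $\mathrm{Gr}^{(1,1)}=O(3,19)/S$ of signature-$(1,1)$ planes, and density of $E$ reduces to density of that orbit. This is where I expect the main obstacle to be. The naive real approximation---approximating a null frame of a real $(1,1)$-plane by rational isotropic vectors, which are dense on the quadric $\langle\omega,\omega\rangle=0$ precisely because it carries a rational point---produces primitive rank-$2$ sublattices with Gram matrix $\left(\begin{smallmatrix}0&m\\m&0\end{smallmatrix}\right)$, and these yield a genuine section only when $m=\pm1$; controlling the index $m$ (i.e.\ producing a \emph{unimodular} hyperbolic plane, not merely an elliptic fibration with a multisection) is exactly the difficulty. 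To overcome it I would use that $S$ is noncompact and $O(L)$ is a lattice in $O(3,19)$, so by Moore's ergodicity theorem the $O(L)$-action on $\mathrm{Gr}^{(1,1)}$ is ergodic; an orbit-closure analysis---equivalently, density of the $S$-orbit of the corresponding rational point in the finite-volume space $O(L)\backslash O(3,19)$---then forces the orbit of the rational plane $N_0\otimes\mathbb{R}$ to be dense. This gives density of $E$ and proves the first assertion.

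Finally, for the polarized moduli spaces $M_d$ I would fix a primitive $\ell\in L$ with $\langle\ell,\ell\rangle=2d>0$ and use that $M_d$ is an arithmetic quotient of $D_\ell=\{[\omega]\in D:\langle\omega,\ell\rangle=0\}$, the period domain of $\ell^\perp\cap L$ of signature $(2,19)$. An $\ell$-polarized K3 surface is elliptic with a section precisely when $NS(X)$ contains a copy of $U$ through $\ell$; realizing $\ell=(1,d)$ inside $U$, its orthogonal mate has square $-2d$, so this is a Noether--Lefschetz condition in $\ell^\perp\cap L$: orthogonality of $P$ to a primitive class of square $-2d$ completing $\ell$ to a copy of $U$. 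Running the argument of the previous paragraph inside the indefinite lattice $\ell^\perp\cap L$ (whose relevant stabilizer is again noncompact) gives density of these period points in $D_\ell$, hence density in $M_d$.
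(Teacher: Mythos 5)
Your setup is sound through the identification of the problem: the reduction to the period domain, the fact that a $U$-sublattice of $NS(X)$ is automatically a primitive direct summand, the single $O(L)$-orbit of copies of $U$, the stabilizer $S\cong O(1,1)\times O(2,18)$, and especially your diagnosis that naive rational approximation only produces Gram matrices $\left(\begin{smallmatrix}0&m\\m&0\end{smallmatrix}\right)$ (multisections) are all correct. (For comparison: the paper gives no argument at all here, citing Huybrechts, Remark 14.3.9 and Barth--Peters--Van de Ven, Ch.\ VIII, so you should be measured against that standard proof.) But the step you lean on to overcome the difficulty is false: the $O(L)$-orbit of $N_0\otimes\mathbb{R}$ is \emph{not} dense in $\mathrm{Gr}^{(1,1)}$; it is closed and discrete there. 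Indeed, suppose $W_n\to W$ in $\mathrm{Gr}^{(1,1)}$, where $W_n=\mathrm{span}_{\mathbb{R}}(e_n,f_n)$ with $e_n,f_n\in L$ isotropic and $\langle e_n,f_n\rangle=1$. Since $W$ is nondegenerate of signature $(1,1)$, the two isotropic lines of $W_n$ converge to the two isotropic lines $[a],[b]$ of $W$ (isotropic directions vary continuously near a nondegenerate plane), so after relabeling $e_n=s_na_n$, $f_n=t_nb_n$ with unit vectors $a_n\to\pm a$, $b_n\to\pm b$. Then $1=\langle e_n,f_n\rangle=s_nt_n(\langle a,b\rangle+o(1))$ forces $s_nt_n$ to be bounded, while nonzero vectors of the lattice $L$ have Euclidean length bounded below, so $s_n$ and $t_n$ are each bounded; hence $(e_n,f_n)$ ranges over a finite set and $W_n$ is eventually constant. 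So Moore's theorem cannot rescue the argument: ergodicity gives density of \emph{almost every} orbit, and the orbit of the rational plane $N_0\otimes\mathbb{R}$ is exactly one of the exceptional orbits. Dually, the $S$-orbit of the base point in $O(L)\backslash O(3,19)$ is a closed, proper subset ($S$ is the real stabilizer of a rational nondegenerate subspace), so your proposed "orbit-closure analysis" yields closedness, the opposite of the density you assert.

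There is also a logical slippage one step earlier: density of the orbit in $\mathrm{Gr}^{(1,1)}$ would imply density of $E$, but not conversely, and in fact $E$ \emph{is} dense while the orbit is not. Density of $E$ is produced by copies of $U$ whose real spans escape toward \emph{degenerate} planes (outside $\mathrm{Gr}^{(1,1)}$) while their orthogonal complements still sweep arbitrarily close to any prescribed period plane; your Grassmannian-orbit formulation cannot see this phenomenon, so you replaced the statement by a strictly stronger, false one. The standard repair---essentially the argument behind the reference the paper cites---avoids the Grassmannian entirely: first approximate an isotropic direction of $P_0^{\perp}$ by a primitive isotropic $e\in L$ (density of rational points on the quadric cone); then note that the isotropic $f\in L$ with $\langle e,f\rangle=1$ form the family $f_v=f_0+v-\tfrac{1}{2}\langle v,v\rangle e$, $v\in(\mathbb{Z}e\oplus\mathbb{Z}f_0)^{\perp}\cap L$ (Eichler transvections of one such $f_0$), and choose $v$ so that $\mathrm{span}(e,f_v)^{\perp}\otimes\mathbb{R}$ contains positive $2$-planes arbitrarily close to a small perturbation of $P_0$. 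That choice reduces to the density of the image of a rank-$20$ lattice under orthogonal projection onto a generic $2$-plane---an elementary fact---rather than to any orbit density in $\mathrm{Gr}^{(1,1)}$. Since your treatment of the polarized spaces $M_d$ "runs the argument of the previous paragraph," it inherits the same gap.
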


\section{Vanishing Kobayashi-Einsenman volume on Elliptic k3 surfaces with a section}

In this section, we prove 

\begin{prop}\label{prop:vanishing1}
For any elliptic K3 surface $\pi : X \rightarrow \mathbb{P}^1$ with a section, the Kobayashi-Eisenman volume on $X$ is vanishing. 
\end{prop}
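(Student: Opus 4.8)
The plan is to prove that $\Psi_X$ vanishes identically on the dense open subset $X^\circ := \pi^{-1}(\mathbb{P}^1\setminus S)$ lying over the complement of the finite set $S\subset\mathbb{P}^1$ of critical values of $\pi$; since $X^\circ$ is open of full measure, this already exhibits $X$ as non-measure hyperbolic in the sense of Definition~\ref{def:measure-nonhyperbolicity}. The mechanism is that each smooth fiber is an elliptic curve, whose universal cover $\mathbb{C}\to E$ lets us insert holomorphic disks of arbitrarily large radius in the fiber direction, while a single extra base parameter supplies the second, transverse direction needed for a $2$-vector. Note that because $\dim_{\mathbb{C}}X=2$, the space $\bigwedge^2 T_{X,x}$ is one-dimensional, so every $2$-vector is decomposable and any two nonzero ones are proportional; this makes the proportionality condition in the definition of $\Psi_X$ automatic.

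First I would fix $x\in X^\circ$ on the smooth fiber $E_{s_0}$ and choose a small coordinate disk $D\subset\mathbb{P}^1\setminus S$ about $s_0$. Over $D$ the map $\pi$ is a proper holomorphic submersion with elliptic fibers, so I would construct a relative exponential map $p:D\times\mathbb{C}\to\pi^{-1}(D)$, $(s,z)\mapsto\exp_s(z)$, holomorphic and a local biholomorphism. I would produce it from a nowhere-vanishing relative holomorphic $1$-form $\omega_s$ (which exists because the direct image $\pi_*\omega_{\pi^{-1}(D)/D}$ is a trivial line bundle on the disk $D$) together with a holomorphic origin $0_s$ (a local section of $\pi$, supplied here by the given section $C_0$): the prescription that $\exp_s(z)$ is the point $q\in E_s$ with $\int_{0_s}^q\omega_s\equiv z$ modulo the period lattice $\Lambda_s$ realizes $E_s\cong\mathbb{C}/\Lambda_s$ and depends holomorphically on $s$ by Gauss--Manin. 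Since $\pi\circ p(s,z)=s$, the vector $p_*\partial_z$ is vertical and nonzero while $p_*\partial_s$ is not vertical, so the $2$-vector $\eta:=(p_*\partial_s\wedge p_*\partial_z)$ at the point over $x$ is nonzero.

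Writing $x=p(s_0,z_0)$ and choosing $\epsilon$ small enough that $s_0+\epsilon\,\triangle\subset D$, I would then define, for each $R>0$, the holomorphic map
\[
\phi_R:\triangle^2\to X,\qquad \phi_R(s,w)=p\bigl(s_0+\epsilon s,\; z_0+R\,w\bigr),\qquad \phi_R(0,0)=x.
\]
Differentiating at the origin gives
\[
(\phi_R)_*\Bigl(\tfrac{\partial}{\partial t_1}\wedge\tfrac{\partial}{\partial t_2}\Bigr)=\epsilon\,R\,\eta .
\]
For a prescribed nonzero $\zeta\in\bigwedge^2 T_{X,x}$ we have $\eta=c\,\zeta$ for a unique $c\neq0$, so $(\phi_R)_*(\partial_{t_1}\wedge\partial_{t_2})=\mu_R\,\zeta$ with $\mu_R=\epsilon R c$ and $|\mu_R|=\epsilon|c|R\to\infty$. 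By the definition of the Kobayashi--Eisenman pseudovolume this yields $\Psi_X(\zeta)\le 1/|\mu_R|\to 0$, hence $\Psi_X(\zeta)=0$. As $x\in X^\circ$ and $\zeta\neq0$ were arbitrary, $\Psi_X\equiv0$ on $X^\circ$, which gives the proposition.

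The only real work is the construction of the holomorphic relative exponential map $p$ and the verification that it is a local biholomorphism (equivalently $\eta\neq0$); everything afterward is the elementary rescaling $w\mapsto Rw$ that exploits the entirety of the universal cover $\mathbb{C}$ of the fiber. I expect the main obstacle to be making the holomorphic dependence on the base parameter $s$ precise---choosing $\omega_s$ and the period lattice $\Lambda_s$ holomorphically over $D$---together with the observation that the rescaling need only blow up the fiber derivative, the base disk staying of fixed small radius $\epsilon$, so that no hyperbolicity of the base $\mathbb{P}^1$ is ever required.
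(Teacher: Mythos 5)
Your proof is correct, but it follows a genuinely different route from the paper's. The paper first builds the Weierstrass model $\overline{X}$ of $X$ (contracting the fiber components missed by the section), then produces a single global holomorphic map $F:\mathbb{C}\times R_X\rightarrow\overline{X}$, $F=[\wp_\Lambda(z),\wp_\Lambda'(z),1]$, via the Weierstrass $\wp$-function of the holomorphically varying period lattice, and concludes by combining the product estimate (Lemma~\ref{lemma:product}) with $KR_{\mathbb{C}}=0$, the decreasing property (Lemma~\ref{lemma:decreasing}), and the birational invariance of vanishing (Proposition~\ref{prop:birational}) to carry the conclusion back from $\overline{X}$ to $X$. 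You instead stay on $X$ itself, uniformize the family fiberwise over a small disk $D$ in the base by a relative Abel--Jacobi/exponential map $p:D\times\mathbb{C}\rightarrow\pi^{-1}(D)$, and extract the vanishing directly from the definition of $\Psi_X$ by the rescaling $\phi_R$, whose Jacobian $2$-vector has modulus $\epsilon R\lvert c\rvert\rightarrow\infty$. The two proofs share the same engine---the fiber direction is uniformized by all of $\mathbb{C}$, so arbitrarily large polydisks map in---but your version is more self-contained: it needs no Weierstrass model, no $\wp$-function theory, and, importantly, no appeal to the birational-invariance lemma or the product lemma, replacing all of them with an explicit two-line computation. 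What the paper's route buys in exchange is a single globally defined dominating map and an argument assembled entirely from quotable lemmas. Note that both arguments establish vanishing only over the complement of the singular fibers (your $X^\circ$; the paper's locus over $R_X$, transported through the birational map), and this open dense, full-measure set of vanishing is exactly what Definition~\ref{def:measure-nonhyperbolicity} and the later volume integral in Corollary~\ref{cor:upper-semicontinuity-volume} and Corollary~\ref{cor:vanishing-k3} require, so your conclusion is just as strong as the paper's where it matters. The one point you rightly flag as needing care---joint holomorphicity of $p$ in $(s,z)$---is standard and can be made cleanest by observing that $p(s,z)$ is the time-$z$ flow, starting on the section $C_0$, of the holomorphic vertical vector field $v$ determined by $\omega_s(v)=1$; completeness of the flow follows from compactness of the fibers, and holomorphic dependence of flows on parameters gives what you need.
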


Proposition~\ref{prop:vanishing1} follows from \cite{MR1738063} which
assumes dominability by $\mathbb{C}^2$, but we proceed with the explicit construction. 

For an elliptic K3 surface $\pi : X \rightarrow \mathbb{P}^1$ with a
section, the section meets every fibre transversally, and it meets a singular fibre $X_t=\sum m_i C_i$ in precisely one of the irreducible components $C_{i_0}$ and in a smooth point for which $m_{i_0}=1$. By Kodaira's classfication of singular fibres the reduced curve $\sum_{i\neq i_0}C_i$ is an ADE curve which can be contracted to a simple surface singularity.

Due to the fact that $\pi : X \rightarrow \mathbb{P}^1$ always has a Weierstrass model $\overline{X}$ which is birational to $X$, Proposition~\ref{prop:vanishing1} immediately follows from Proposition~\ref{prop:birational} and Proposition~\ref{prop:non-constant-map}.

\begin{prop}\label{prop:birational}
	Let $\tau : M \rightarrow M'$ be a birational equivalence of complex manifolds. Suppose that the Kobayashi-Eisenman pseudo-volume on $M$ vanishes. Then it vanishes on $M'$. 
	\begin{proof}
		Apply \cite[Lemma 1.14]{MR2132645}. 
	\end{proof}	
\end{prop}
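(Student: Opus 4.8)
The plan is to reduce the statement to the distance-decreasing property (Lemma~\ref{lemma:decreasing}) by dominating both $M$ and $M'$ by a single complex manifold. First I would replace the bimeromorphic map $\tau$ by a common resolution: there is a complex manifold $W$ together with proper bimeromorphic morphisms $p\colon W \to M$ and $q\colon W \to M'$ such that $q=\tau\circ p$ wherever the latter is defined (obtained, for instance, by resolving the indeterminacy of $\tau$ through a finite sequence of blow-ups with smooth centers). Both $p$ and $q$ are genuine holomorphic maps, and each restricts to a biholomorphism over the complement of a proper analytic subset, namely the images in $M$ and $M'$ of the corresponding exceptional loci. Throughout write $n=\dim M=\dim M'$.

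Since $q\colon W \to M'$ is holomorphic, Lemma~\ref{lemma:decreasing} gives $q^{*}\Psi_{M'}\le \Psi_{W}$. Hence it suffices to prove that $\Psi_{W}$ vanishes: for then $q^{*}\Psi_{M'}=0$, and at every point $w\in W$ where $dq_w$ is an isomorphism (a dense open set, since $q$ is bimeromorphic) we obtain $\Psi_{M'}(q(w),\,dq_w\zeta)=0$ for all $\zeta$. As $q$ maps this set onto a dense open subset of $M'$, the pseudo-volume $\Psi_{M'}$ vanishes there, which is the desired conclusion. Thus the whole weight of the argument is carried by the implication $\Psi_{M}=0 \Rightarrow \Psi_{W}=0$ along the morphism $p$.

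This last implication is the main obstacle, and it is where Lemma~\ref{lemma:decreasing} alone does not suffice: the inequality it supplies for the open inclusion of the biholomorphism locus points the wrong way, bounding $\Psi_M$ from above by the pseudo-volume of the open piece rather than conversely. The substantive step is therefore to transport spreading polydisk-maps from $M$ up to $W$. Concretely, given $x\in M$ off the center of $p$ together with a frame $\zeta$, the hypothesis $\Psi_M=0$ furnishes holomorphic $F\colon \triangle^{n}\to M$ with $F(0)=x$ and $F_{*}(\partial_{t_1}\wedge\cdots\wedge\partial_{t_n})=\mu\zeta$ for $|\mu|$ arbitrarily large; composing with the local inverse of $p$ yields a holomorphic lift defined on a neighborhood of $0$, namely on the complement of the proper analytic set $F^{-1}(\mathrm{center})$, which does not contain $0$. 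The difficulty is that, as $|\mu|\to\infty$, the polydisk on which the lift is defined may shrink, so that after rescaling the resulting bound on $\Psi_W$ need not tend to $0$.

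This shrinking phenomenon is precisely what is controlled by the insensitivity of the Kobayashi--Eisenman measures to the removal of analytic subsets \cite{MR0367300, MR1321580}, and its clean packaging is \cite[Lemma 1.14]{MR2132645}, which I would invoke to conclude that deleting the (analytic) exceptional locus creates no volume, so that $\Psi_W=0$. Alternatively, since in our application the vanishing on $M$ is witnessed by dominating maps from $\mathbb{C}^{n}$ (dominability, as in the construction of Section~4), one can instead transport the dominating map itself through $\tau$: its generic non-degeneracy is preserved under a bimeromorphic modification, and dominability by $\mathbb{C}^{n}$ forces the Kobayashi--Eisenman pseudo-volume to vanish, thereby bypassing the radius-shrinking issue entirely.
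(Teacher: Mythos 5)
Your proof is correct and ultimately rests on the same pillar as the paper's: the paper's entire proof is the citation of \cite[Lemma 1.14]{MR2132645}, which is exactly what you invoke at the decisive step (transporting vanishing across the ``wrong-way'' direction of a modification, where the plain decreasing property of Lemma~\ref{lemma:decreasing} fails). The common-resolution scaffolding and the dominability alternative you add are harmless but redundant, since that lemma already applies directly to the bimeromorphic map $\tau$ (equivalently to $\tau^{-1}$) without passing through an auxiliary manifold $W$.
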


\begin{prop}\label{prop:non-constant-map}
	Consider an elliptic K3 surface $\pi : X \rightarrow \mathbb{P}^1$ with a section. Then the Kobayashi-Eisenman volume of the Weierstrass model $\overline{X}$ vanishes. 
\end{prop}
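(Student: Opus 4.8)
The plan is to show that the Weierstrass model $\overline{X}$ is dominated by $\mathbb{C}^2$ in the measure-theoretic sense sufficient to kill the Kobayashi-Eisenman pseudovolume, by building an explicit holomorphic map $F:\mathbb{C}^2 \to \overline{X}$ whose image is Zariski-dense and whose Jacobian is generically nonzero. The key geometric input is the section $C_0$, which gives a rational map to $\mathbb{P}^1$ with a distinguished origin on each fibre, together with the fact that the generic fibre is an elliptic curve $E_t$. First I would cover a Zariski-open subset $U \subset \mathbb{P}^1$ over which $\pi$ is smooth and the section supplies a holomorphic origin, so that $\pi^{-1}(U)$ is a family of genuine elliptic curves. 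The idea is then to exploit that each elliptic curve $E_t$ is uniformized by $\mathbb{C}$ via the exponential map $\mathbb{C}\to \mathbb{C}/\Lambda_t \cong E_t$, and to package these uniformizations into a single holomorphic map from an open piece of $\mathbb{C}^2$.

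The main construction I would carry out: choose a small disk $D \subset U$ on which the period lattice $\Lambda_t = \mathbb{Z} + \mathbb{Z}\tau(t)$ varies holomorphically (with the section providing the zero section of the family), and define
\begin{equation*}
F : D \times \mathbb{C} \longrightarrow \overline{X}, \qquad F(t,w) = \big(t,\ \wp(w;\Lambda_t),\ \wp'(w;\Lambda_t)\big)
\end{equation*}
expressed in Weierstrass coordinates, where $\wp(\,\cdot\,;\Lambda_t)$ is the Weierstrass $\wp$-function of the fibre $E_t$. Pulling back to the second coordinate by $t \mapsto t$ and composing the $w$-variable with the universal cover of a single disk, one obtains a holomorphic map whose domain is biholomorphic to a piece of $\mathbb{C}^2$ (the $t$-disk being itself an image of $\mathbb{C}$, and the $w$-plane being all of $\mathbb{C}$). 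Since $\wp'$ separates the two sheets and the $t$-coordinate varies independently, the Jacobian determinant $\det(\partial F/\partial(t,w))$ does not vanish identically; at a generic point it is nonzero. By the decreasing property (Lemma~\ref{lemma:decreasing}), at any such point $p = F(t_0,w_0)$ we have $\Psi_{\overline{X}}\big(F_*(\partial_t \wedge \partial_w)\big) \le \Psi_{\mathbb{C}^2}(\partial_t \wedge \partial_w) = 0$, and since $F_*(\partial_t \wedge \partial_w) \neq 0$ there this forces $\Psi_{\overline{X}}$ to vanish on a nonempty open set of $2$-vectors at $p$; letting $p$ range over the (Zariski-dense) image shows $\Psi_{\overline{X}} \equiv 0$ on a dense open set, hence everywhere by continuity.

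The hard part will be the \emph{global} nature of the uniformization: the period $\tau(t)$ and hence the family of lattices $\Lambda_t$ is only locally single-valued, with nontrivial monodromy around the points of $\mathbb{P}^1 \setminus U$ where the fibres degenerate, so $F$ as written is a priori defined only over a disk $D$ rather than over all of $\mathbb{P}^1$. I would handle this in two stages. First, to arrange that the $t$-direction itself has $\mathbb{C}$ as source rather than just a disk, I would use that $\mathbb{P}^1$ minus a finite set still receives a dominant holomorphic map from $\mathbb{C}$ when that set is small, but more cleanly one only needs a \emph{local} dominating map to conclude vanishing of $\Psi$ on an open set, and then invoke the fact (recorded in the excerpt) that $\Psi_{\overline X}$ is nonzero on no open set iff it vanishes a.e.; thus a single disk $D\times\mathbb{C}$ dominating an open subset of $\overline X$ already suffices. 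Second, I must confirm that $F$ genuinely lands in $\overline{X}$ and not merely in its smooth locus---this is automatic since the Weierstrass coordinates $(t,x,y)$ with $y^2 = 4x^3 - g_2(t)x - g_3(t)$ define $\overline{X}$ precisely as the closure of the image, and the ADE singularities that were contracted lie over the finitely many $t \notin U$, away from our open set. Verifying holomorphicity of $t\mapsto \Lambda_t$ on $D$ (equivalently, that $g_2(t),g_3(t)$ are holomorphic with $\Delta(t)\neq 0$ on $U$) is routine from the Weierstrass presentation, so the only real subtlety is the book-keeping to ensure the image is open and the Jacobian nondegenerate, which the presence of the section makes transparent.
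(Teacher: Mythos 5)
Your core construction is the same as the paper's: uniformize the smooth elliptic fibres by the Weierstrass $\wp$-function, map $(t,w)\mapsto[\wp(w;\Lambda_t):\wp'(w;\Lambda_t):1]$ into the Weierstrass model, and kill the pseudovolume of the source because one factor of the domain is $\mathbb{C}$. One slip along the way: your parenthetical claim that ``the $t$-disk being itself an image of $\mathbb{C}$'' is false --- by Liouville's theorem every holomorphic map $\mathbb{C}\to D$ is constant, so $D\times\mathbb{C}$ is \emph{not} dominated by $\mathbb{C}^2$ and $\Psi_{\mathbb{C}^2}$ is not the right comparison object. This particular slip is harmless, since Lemma~\ref{lemma:product} gives $\Psi_{D\times\mathbb{C}}\leq pr_1^{*}\Psi_{D}\otimes pr_2^{*}\Psi_{\mathbb{C}}=0$ already because the \emph{second} factor alone is $\mathbb{C}$, after which Lemma~\ref{lemma:decreasing} applies exactly as you intend.

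The genuine gap is in your globalization step. Working over a single disk $D$ you obtain $\Psi_{\overline{X}}=0$ only on the tube of fibres over $D$, an open set whose complement has positive measure, and neither of your two proposed upgrades closes the distance to the proposition. First, ``a single disk already suffices'' misreads what this proposition must deliver: in Corollary~\ref{cor:vanishing-k3} the paper uses $\Psi_{X'}\equiv 0$ (so that $\operatorname{Vol}=\int\Psi\wedge\overline{\Psi}=0$), and the application of Proposition~\ref{prop:upper-semicontinuity} requires the pseudovolume of the central fibre to be \emph{continuous}, which the paper secures by knowing it is constantly zero; vanishing on one tube gives neither, and ``vanishes on some open set'' is not equivalent to ``vanishes almost everywhere.'' Second, ``hence everywhere by continuity'' is doubly unjustified: the image of a single $D\times\mathbb{C}$ is not dense in $\overline{X}$ in the analytic topology (Zariski-density is irrelevant for a non-algebraic pseudovolume form), and $\Psi$ is in general only \emph{upper} semicontinuous, which gives $\limsup_{x_i\to x}\Psi(x_i)\leq\Psi(x)$ --- the wrong direction for propagating vanishing from a dense set to its limit points. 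The monodromy problem you correctly flag as ``the hard part'' is precisely what the paper's proof is engineered to avoid: it defines $\Lambda_t$ not via a choice of basis $\mathbb{Z}+\mathbb{Z}\tau(t)$ but intrinsically, as the set of \emph{all} periods $\int_{\gamma}dx/\sqrt{4x^3-g_2(t)x-g_3(t)}$ over all contours on the double cover; this set is monodromy-invariant, so the map $F$ is defined on all of $\mathbb{C}\times R_X$ with $R_X=\mathbb{P}^1\setminus\{\triangle=0\}$, and the conclusion is vanishing of $\Psi_{\overline{X}}$ off finitely many fibres, i.e.\ off a set of measure zero. Your argument can be repaired the same way, or even more simply by running your local construction at every $t_0\in R_X$ and taking the union of the resulting tubes; either fix is one sentence, but as written your final step does not hold.
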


\begin{proof}[Proof of Proposition~\ref{prop:non-constant-map}]
We first construct the Weierstrass model $\overline{X}$ of $X$. The
construction of Weirestrass model for an elliptic K3 surface with a
section is well-known (for example, \cite[Section 11.2]{MR3586372}). We provide the proof for the sake of completeness. 	
	
Fix a section $C_0 \subset X$ of $\pi : X \rightarrow \mathbb{P}^1$. Then we construct the desired map from a Weierstrass model $\overline{X}$ of $X$ as follows: the exact sequence $0 \rightarrow \mathcal{O}_X \rightarrow \mathcal{O}(C_0) \rightarrow \mathcal{O}_{C_0}(-2) \rightarrow 0$ induces the long exact sequence
\begin{equation*}
	0 \rightarrow \mathcal{O}_{\mathbb{P}^1} \rightarrow \pi_{*}\mathcal{O}(C_0) \rightarrow \mathcal{O}_{\mathbb{P}^1}(-2) \rightarrow R^1\pi_{*}\mathcal{O}_X \rightarrow 0,
\end{equation*}
where the vanishing $R^1\pi_{*}\mathcal{O}(C_0)=0$ follows from the corresponding vanishing on the fibres. Note that $\pi_{*}\mathcal{O}(C_0)$ is a line bundle, as $h^0(X_t, \mathcal{O}(p))=1$ for any point $p\in X_t$ in an arbitrary fibre $X_t$. (It is enough to test smooth fibres, as $\pi_{*}\mathcal{O}(C_0)$ is torsion free.) Thus, the cokernel of $\mathcal{O}_{\mathbb{P}^1} \rightarrow \pi_{*}\mathcal{O}(C_0)$ is torsion, but also contained in the torsion free $\mathcal{O}_{\mathbb{P}^1}(-2)$. Hence, 
\begin{equation*}
	\mathcal{O}_{\mathbb{P}^1}\simeq \pi_{*} \mathcal{O}(C_0) \text{ and } \mathcal{O}_{\mathbb{P}^1}(-2)\simeq R^1 \pi_{*} \mathcal{O}_X.
\end{equation*} 
Similarly, using the short exact sequences
\begin{equation*}
	0 \rightarrow \mathcal{O}(C_0) \rightarrow \mathcal{O}(2C_0) \rightarrow \mathcal{O}_{C_0}(-4) \rightarrow 0,
\end{equation*}
and 
\begin{equation*}
	0 \rightarrow \mathcal{O}(2C_0) \rightarrow \mathcal{O}(3C_0) \rightarrow \mathcal{O}_{C_0}(-6) \rightarrow 0,
\end{equation*}
we can deduce that $\pi_{*} \mathcal{O}(2C_0)\simeq \mathcal{O}_{\mathbb{P}^1}(-4)\oplus \mathcal{O}_{\mathbb{P}^1}$ and $\pi_{*} \mathcal{O}(3C_0)\simeq \mathcal{O}_{\mathbb{P}^1}(-4)\oplus \mathcal{O}_{\mathbb{P}^1}(-6)\oplus \mathcal{O}_{\mathbb{P}^1}$. Let $F:= \mathcal{O}_{\mathbb{P}^1}(-4)\oplus \mathcal{O}_{\mathbb{P}^1}(-6)\oplus \mathcal{O}_{\mathbb{P}^1}$. Thus, the linear system $\mathcal{O}(3C_0)_{|_{X_t}}$ on the fibres $X_t$ (or rather the natural surjection $\pi_{*} F = \pi^{*}\pi_{*} \mathcal{O}(3C_0)\rightarrow \mathcal{O}(3C_0)$) defines a morphism
\begin{equation*}
	\varphi : X \rightarrow \mathbb{P}(F^{*})
\end{equation*}
with $\varphi^{*}\mathcal{O}(1)\simeq \mathcal{O}_p(3C_0)$, which is a
closed embedding of the smooth fibres and contracts all components of
singular fibres $X_t$ that are not met by $C_0$ (since
$\mathcal{O}(3C_0)$ is indeed base point free on all fibres). The image
$\overline{X}$ is the {\em Weierstrass model} of the elliptic surface
$X$.
Using the Riemann-Roch theorem on the fibers, one learns that there is
an equation satisfied by the image $\overline{X}\subset \mathbb{P}(F^{*})$, which can be regarded as a section
\begin{equation*}
        f \in H^{0}(\mathbb{P}(F^{*}),\mathcal{O}_p(3)\otimes
p^{*}\mathcal{O}_{\mathbb{P}^1}(6d)).
\end{equation*}
for some degree $d$. 

In order to determine which $d$ corresponds to a K3 surface, we use the
adjunction formula 
$\omega_{\overline{X}}\simeq (\omega_{\mathbb{P}(F^{*})}\otimes
\mathcal{O}(\overline{X}))|_{\overline{X}}$ and the relative Euler
sequence expressing $\omega_{\mathbb{P}(F^{*})}$ to show that
$\omega_{\overline{X}}=O_{\overline{X}}$ if and only if
$\mathcal{O}(\overline{X})\simeq \mathcal{O}_p(3)\otimes
p^{*}\mathcal{O}_{\mathbb{P}^1}(12)$, i.e., $d=2$. 

Now use $H^{0}(\mathbb{P}(F^{*}),\mathcal{O}_p(3)\otimes p^{*}\mathcal{O}_{\mathbb{P}^1}(12))\simeq H^{0}(\mathbb{P}^1, Sym^3(F)\otimes \mathcal{O}_{\mathbb{P}^1}(12))$ and view $x,y$, and $z$ as the local coordinates of the direct summands $\mathcal{O}_{\mathbb{P}(-4)},\mathcal{O}_{\mathbb{P}(-6)}$, and $\mathcal{O}_{\mathbb{P}}$ of $F$. 
By a change of coordinates, the equation $f$ can be put into Weierstrass
form (cf.~\cite{silverman}):
\begin{equation}\label{eq:Weierstrass}
y^2z=4x^3-g_2 xz^2-g_3 z^3
\end{equation}
with coefficients
\begin{equation*}
	g_2 \in H^0(\mathbb{P}^1,\mathcal{O}_{\mathbb{P}^1 }(8)), \text{
} g_3 \in H^0(\mathbb{P}^1,\mathcal{O}_{\mathbb{P}^1 }(12)),
\end{equation*}
The terms in the Weierstrass equation can be seen as a sections of $p_{*}\mathcal{O}_p(3)\otimes
\mathcal{O}_{\mathbb{P}^1}(12)$, which implies, for example, that
$g_2$ can be interpreted as a
section of
$\mathcal{O}_{\mathbb{P}^1}(8)=[xz^2]\mathcal{O}(-4)_{\mathbb{P}^1}\otimes
\mathcal{O}_{\mathbb{P}^1}(12)$. The discriminant is the non-trivial
section $\triangle:={g_2}^3-27{g_3}^2\in H^0(\mathbb{P}^1,\mathcal{O}_{\mathbb{P}^1}(24))$.
Applying the standard coordinate changes, one can always reduce to the
situation that $f$ has this form and $(g_2,g_3)$ is unique up to passing
to $(\lambda^4 g_2, \lambda^6 g_3)$, where $\lambda$ is a function which
is non-vanishing away from the singular fibers.

For a given elliptic K3 surface with section $X$, we let $g_2(t)$ and
$g_3(t)$ be Weierstrass coeffients, and
$\triangle(t)=g_2(t)^3-27g_3(t)^2$ be the discriminant locus.  Let $S_X$ be the set of zeroes of the discriminant $\triangle(t)$ for 
$t\in\mathbb{P}^1$, and $R_X=\mathbb{P}^1 - S_X$.
We define a holomorphic map $F : \mathbb{C}\times R_X \rightarrow
\overline{X}$ by using the Weierstrass $\wp$-function associated with
~\eqref{eq:Weierstrass}, as follows (see \cite{MR1027834}). First, to a pair $(g_2,g_3)$ of
complex numbers satisfying $g_2^3-27g_3^2\ne0$, we associate a {\em lattice} $\Lambda\subset\mathbb{C}$
which is the set of  possible values
$$\omega = \int_\gamma \frac{dx}{\sqrt{4x^3-g_2x-g_3}} ,$$
where $\gamma$   ranges over all possible contours of
integration defined on the double cover of the complex $x$-plane defined
by $ \sqrt{4x^3-g_2x-g_3}$.  (The condition $g_2^3-27g_3^2\ne0$ ensures
that the cubic $4x^3-g_2x-g_3$ does not have any repeated roots, so that
the
double cover is ramified at precisely $4$ points, including infinity.)  Note
that the elements $\omega\in \Lambda$ depend holomorphically on $g_2$
and $g_3$.

The Weierstrass $\wp$-function is defined by
$$ \wp_{\Lambda}(z) = \frac1z + \sum_{0\ne\omega\in\Lambda}
\left\{\frac1{(z-\omega)^2} - \frac1{\omega^2}\right\},$$
which converges uniformly on compact subsets of $\mathbb{C}-\Lambda$ to
a doubly-periodic meromorphic function with poles at $z=\omega\in
\Lambda$ and periods $\wp_{\Lambda}(z+\omega) = \wp_{\Lambda}(z)$ 
for $\omega\in\Lambda$.  Note that this function also depends
holomorphically on $g_2$ and $g_3$.

Using the Weiestrass $\wp$-function, we define a holomorphic map $F :
\mathbb{C}\times R_X \rightarrow
\overline{X}$ starting from the meromorphic map
$F=[\wp_\Lambda(z),\wp'_\Lambda(z),1]$ for $z\in\mathbb{C}$ and
extending holomorphically over the poles of $\wp_\Lambda$.
The image of $F$ satisfies 
\eqref{eq:Weierstrass} 
with Weierstrass coefficients
$$ g_2(\Lambda) = 60\sum_{0\ne\omega\in\Lambda}\frac1{\omega^4}; \quad
g_3(\Lambda) = 140\sum_{0\ne\omega\in\Lambda}\frac1{\omega^6}$$
(see for example \cite{MR1027834}).

Now apply Lemma~\ref{lemma:decreasing}, i.e., the decreasing property of the Kobayashi-Eisenman volume. Since the Kobayashi-Eisenman (in fact Kobayashi-Royden metric) pseudovolume on the first component $\mathbb{C}$ is zero and by Lemma~\ref{lemma:product}, the proof is complete.

\end{proof}

\section{Vanishing Kobayashi-Eisenman pseudo-volume on K3 surfaces}

\subsection{Upper semi-continuity of Kobayashi-Eisenman volume}
We are interested in the upper semicontinuity of $\Psi_{X_t}$ in the variable $t$ for a proper smooth fibration $\pi : \mathcal{M} \rightarrow T$, i.e., $\pi$ is holomorphic, surjective, having everywhere of maximal rank and connected fibers $X_t=\pi^{-1}(t)$. We say a function $F$ on a topological space $X$ with values $\mathbb{R}\cup \left\{ \infty\right\}$ is upper semi-continuous if and only if $\left\{x\in X : F(x)<\alpha \right\}$ is an open set for every $\alpha \in \mathbb{R}$. It is upper semi-continuous at a point $x_0\in X$ if for all $\epsilon>0$ there is a neighbourhood of $x_0$ containing $\left\{x \in X : F(x) < F(x_0)+\epsilon \right\}$. If $X$ is a metric space, this is equivalent to 
\begin{eqnarray}
	\limsup_{t_i \rightarrow t_0}F(t_i) \leq F(t_0),
\end{eqnarray}
for all sequence $(t_i)$ converging to $t_0$.

We will be interested in the upper semicontinuity of $\Psi_{M_{t}}$ in the variable $t$ for a proper smooth fibration $\pi: \mathcal{M} \rightarrow \Delta$, i.e., $\pi$ is holomorphic, surjective, having everywhere of maximal rank and connected fibers $M_{t}=\pi^{-1}(t)$ by the local Torelli Theorem. We apply the following result of M. Zaidenberg. 

\begin{prop}\label{prop:upper-semicontinuity}\cite[Theorem 4.4]{MR791317}
$f: M \rightarrow \Delta$ is a surjective holomorphic mapping with smooth fibers $D_{c}=f^{-1}(c)(c \in \Delta)$. We fix an arbitrary tubular neighborhood $U$ of the fiber $D_{0}$ and a smooth retraction $\pi: U \rightarrow D_{0}$. Fix an Hermitian metric $h$ on $D_{0}$ and denote the associated volume form by $\omega$. We set $\pi_{c}=\pi \mid U \cap D_{c}$. If the Kobayashi-Eisenman pseudo volume form $\mathrm{\Psi}_{D_{0}}$ is continuous, then for each domain $V_{0} \Subset D_{0}$ and every $\varepsilon>0$ there is a $\delta_{0}>0$ such that for all $c \in \Delta_{\delta_{0}}^{*}$ the following inequality holds:
$$
{\Psi_{D_{c}}}  \leq \pi_{c}^{*}\left(\left[(1+\varepsilon){\Psi_{D_{0}}}+\varepsilon|\omega|\right] \right). 
$$

\end{prop}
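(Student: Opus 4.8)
The plan is to establish the inequality pointwise over $\pi_c^{-1}(V_0)$ by constructing, for each decomposable $n$-vector on $D_c$, a competitor holomorphic polydisk obtained by deforming a near-extremal polydisk in the central fiber $D_0$ into $D_c$. Fix $V_0 \Subset D_0$ and $\varepsilon>0$. Since $f$ is a holomorphic submersion, near every point of $D_0$ the total space $M$ is biholomorphic to a product $W\times\Delta$ with $f$ the projection onto the second factor, so the fibers vary holomorphically; in particular the relative tangent bundle $T_{M/\Delta}:=\ker df$ is a holomorphic vector bundle. For $x\in V_0$ and a decomposable $\zeta\in\bigwedge^n T_{D_0,x}$ of unit $|\omega|$-norm, I choose a holomorphic $\phi=\phi_{x,\zeta}:\triangle^n\to D_0$ with $\phi(0)=x$ and $\phi_*(\partial_{t_1}\wedge\cdots\wedge\partial_{t_n})=\mu_0\,\zeta$, where $1/|\mu_0|\le(1+\varepsilon/4)\,\Psi_{D_0}(\zeta)$. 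Restricting $\phi$ to a slightly smaller polydisk $\triangle_r^n$, with $r<1$ close to $1$, costs only a further multiplicative factor $1+\varepsilon/4$ in the realized volume while forcing the image $\phi(\overline{\triangle_r^n})$ to be a compact subset of $D_0$; this compactness is what makes the deformation below feasible.

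The heart of the argument is to lift $\phi$ to a holomorphic family $\phi_c:\triangle_r^n\to D_c$, holomorphic in $c$, with $\phi_0=\phi$. I would first produce an infinitesimal lift of the base direction: covering the compact set $\phi(\overline{\triangle_r^n})$ by finitely many product charts $M\cong W_i\times\Delta$, the vector field $\partial/\partial s$ on each chart satisfies $df(\partial/\partial s)=\partial_c$, and on overlaps the differences lie in $T_{M/\Delta}$, so a partition of unity furnishes a \emph{smooth} field $\chi$ along $\phi$ with $df(\chi)=\partial_c$. To render the lift holomorphic I pull back to the source polydisk and correct $\chi$ by a section of $\phi^*T_{M/\Delta}$ solving an $\overline{\partial}$-equation; since $\triangle_r^n$ is Stein and contractible one has $H^1(\triangle_r^n,\phi^*T_{M/\Delta})=0$, so the obstruction vanishes and a holomorphic lift exists. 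Integrating it---equivalently, applying the holomorphic implicit function theorem in a Banach space of bounded holomorphic maps $\overline{\triangle_{r'}^n}\to M$ subject to the constraint $f\circ\phi_c=c$---yields $\phi_c$ with $\phi_c\to\phi_0$ in the $C^1$-topology on $\overline{\triangle_{r'}^n}$ as $c\to0$, uniformly for $(x,\zeta)$ in the compact set $\overline{V_0}\times\{|\zeta|_\omega=1\}$.

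Finally I compare volumes. Fix $c\in\Delta_{\delta_0}^*$, a point $y\in U\cap D_c$ over $x=\pi_c(y)\in V_0$, and a decomposable $\eta\in\bigwedge^n T_{D_c,y}$; put $\zeta=(\pi_c)_*\eta$. In the product charts $\pi_c$ is $C^1$-close to the identity on the $W$-factor for small $c$, so after recentering $\triangle_{r'}^n$ I may assume $\phi_c(0)=y$, and the realized $n$-vector $\theta_c:=(\phi_c)_*(\partial_{t_1}\wedge\cdots\wedge\partial_{t_n})$ satisfies $(\pi_c)_*\theta_c\to\mu_0\zeta$; since $(\pi_c)_*$ is an isomorphism on $n$-vectors near $x$, $\theta_c$ is asymptotically parallel to $\eta$, say $\theta_c=\mu_c\,\eta+\delta_c$ with $\mu_c\to\mu_0$ and $\delta_c\to0$. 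Hence, up to the negligible error $\delta_c$, $\Psi_{D_c}(\eta)\le1/|\mu_c|$, and collecting the three controlled factors ($1+\varepsilon/4$ from the choice of $\phi$, $1+\varepsilon/4$ from the radius, and $1+o(1)$ from $\mu_c\to\mu_0$) yields $\Psi_{D_c}(\eta)\le(1+\varepsilon)\,\Psi_{D_0}(\zeta)=(1+\varepsilon)\,(\pi_c^*\Psi_{D_0})(\eta)$. The residual mismatch $\delta_c$, and the degenerate directions where $\Psi_{D_0}(\zeta)=0$ (for which no finite multiplicative bound is available, since there the extremal maps force $|\mu_0|\to\infty$ and the lift exists only for $c$ small depending on the map), are absorbed into the strictly positive additive buffer $\pi_c^*(\varepsilon|\omega|)$. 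This is precisely where continuity of $\Psi_{D_0}$ enters: together with compactness of $\overline{V_0}$ it makes all of the above estimates, and hence the choice of $\delta_0$, uniform in $(x,\zeta)$. The main obstacle is the holomorphic lifting of the second paragraph---constructing genuinely holomorphic competitors in the neighboring fibers with $C^1$ control of their $1$-jets---which rests on the Steinness of the polydisk to kill the $H^1$-obstruction, and on the continuity hypothesis to supply the uniformity needed for a single $\delta_0$.
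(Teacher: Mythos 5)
The paper offers no proof of this proposition at all---it is quoted as Theorem 4.4 of Zaidenberg's paper---so your attempt can only be measured against the expected strategy, and your overall plan (deform near-extremal polydisk maps in the central fiber holomorphically into nearby fibers and use them as competitors, with the Stein/$\overline{\partial}$-correction plus Banach implicit function theorem supplying the lift) is indeed the right one. There are, however, two genuine gaps. The first is the uniformity claim at the end of your second paragraph: you assert that $\phi_c\to\phi_0$ in $C^1$ \emph{uniformly} for $(x,\zeta)\in\overline{V_0}\times\{|\zeta|_\omega=1\}$. This does not follow. The near-extremal maps $\phi_{x,\zeta}$ are chosen by an infimum and need not depend continuously (or even measurably) on $(x,\zeta)$, so compactness of the parameter set by itself yields nothing; the implicit function theorem gives an existence radius $\delta(\phi)$ and a convergence rate that depend on the individual map, and these genuinely degenerate---for instance in directions with $\Psi_{D_0}(\zeta)$ small the extremal maps have unbounded derivatives (maps from $\triangle^n$ into a compact non-hyperbolic fiber have no uniform derivative bound), so $\delta(\phi)$ can shrink to $0$. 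You in fact concede this for degenerate directions, which contradicts the uniformity you claimed two sentences earlier. The repair is a finite-cover argument: show that a \emph{single} map $\phi_{x_0,\zeta_0}$, after deformation and recentering, furnishes competitors for \emph{all} $(x,\zeta)$ in a neighborhood of $(x_0,\zeta_0)$---this is exactly where continuity of $\Psi_{D_0}$ is used, to transfer the bound $1/|\mu_0|\le(1+\varepsilon/4)\Psi_{D_0}(\zeta_0)$ to nearby $\zeta$---and then extract a finite subcover of the compact parameter set and take the minimum of the finitely many $\delta$'s. You gesture at this mechanism in your last paragraph but assert the conclusion rather than derive it.

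The second gap is the step ``$\theta_c=\mu_c\,\eta+\delta_c$ \dots\ hence, up to the negligible error $\delta_c$, $\Psi_{D_c}(\eta)\le 1/|\mu_c|$.'' As written this is not a valid inference: $\Psi_{D_c}(\eta)$ is an infimum over maps whose derivative $n$-vector is an \emph{exact} multiple of $\eta$; there is no notion of an approximate competitor, you cannot pass to limits in $\eta$ because no continuity of $\Psi_{D_c}$ is available (that is the whole point of the theorem), and an error transverse to $\eta$ cannot be ``absorbed into the additive buffer $\varepsilon|\omega|$,'' since the buffer only enlarges the right-hand side of the inequality---it does not manufacture a competitor map. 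Fortunately the gap is vacuous in the present top-degree case: $\bigwedge^{n}T_{D_c,y}$ is one-dimensional, so $\theta_c$ is automatically an exact multiple of $\eta$ and $\delta_c$ simply does not exist. This same observation is what powers the finite-cover argument above, since a single deformed map then bounds $\Psi_{D_c}$ at \emph{every} point of its image where its Jacobian is nonzero, for \emph{every} $n$-vector there. Note, though, that your argument as written would genuinely fail for the intermediate Eisenman measures $\Psi^{p}$ with $p<n$, where decomposability and exact proportionality are real constraints; the one-dimensionality fix is special to the volume form treated here.
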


For the moduli space of marked K3 surfaces, since in our case the central fiber is an elliptic K3 surface with a section, the pseudovolume is the constant $0$. Thus, the continuity at the central fiber implies having the upper semicontinuity with respect to $t$.

\begin{coro}\label{cor:upper-semicontinuity-volume}
	For $M$ a compact complex manifold, let $\operatorname{Vol}(M):=\int_{M} \Psi_{M}\wedge \overline{\Psi_M}$ be the volume of $M$ with respect to $\Psi_{M}$. Then $\operatorname{Vol}(M)$ is upper semicontinuous with respect to the variation of the complex structure on $M$.
\end{coro}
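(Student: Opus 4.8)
The plan is to prove the corollary by integrating Zaidenberg's pointwise estimate (Proposition~\ref{prop:upper-semicontinuity}) over the fibers of a local family and then letting the auxiliary parameter tend to zero. Upper semicontinuity is a pointwise property, so I fix a point $t_0$ in the moduli space and, following the setup of the section, realize a neighborhood of $t_0$ as the base of a proper smooth holomorphic fibration $f:\mathcal{M}\to\Delta$ with central fiber $D_0=M_{t_0}$ (using the local Torelli theorem for the local coordinates). It then suffices to show $\limsup_{c\to 0}\operatorname{Vol}(D_c)\le\operatorname{Vol}(D_0)$. The single hypothesis that Proposition~\ref{prop:upper-semicontinuity} demands is the continuity of $\Psi_{D_0}$, and this is precisely why the argument is anchored at a central fiber where that continuity is known: in the case of interest $D_0$ is an elliptic K3 surface with a section, so $\Psi_{D_0}\equiv 0$ by Proposition~\ref{prop:vanishing1} and the hypothesis is automatic.

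Next I would feed the fiber into Proposition~\ref{prop:upper-semicontinuity}. Since $D_0$ is compact I may take the relatively compact domain to be all of $D_0$ (or exhaust $D_0$ by finitely many such domains and patch), and for $c$ small Ehresmann's theorem guarantees both $D_c\subset U$ and that the restricted retraction $\pi_c=\pi|_{D_c}:D_c\to D_0$ is an orientation-preserving diffeomorphism. Writing $\Phi_\varepsilon:=(1+\varepsilon)\Psi_{D_0}+\varepsilon|\omega|$ for the continuous genuine volume form appearing on the right, the proposition gives, for every $\varepsilon>0$ and all $c\in\Delta^{*}_{\delta_0}$, the pointwise bound of pseudonorms
\[
\Psi_{D_c}\ \le\ \pi_c^{*}\Phi_\varepsilon .
\]
Because $\Psi\mapsto\Psi\wedge\overline{\Psi}$ amounts to squaring the nonnegative pseudonorm against the coordinate volume, this inequality is monotone under passage to the associated pseudo-volume measures, and pullback commutes with $\wedge$ and with conjugation; hence
\[
\operatorname{Vol}(D_c)=\int_{D_c}\Psi_{D_c}\wedge\overline{\Psi_{D_c}}\ \le\ \int_{D_c}\pi_c^{*}\bigl(\Phi_\varepsilon\wedge\overline{\Phi_\varepsilon}\bigr).
\]

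Then I would perform the change of variables along the diffeomorphism $\pi_c$. As $\pi_c$ is an orientation-preserving diffeomorphism onto $D_0$, the right-hand integral equals $\int_{D_0}\Phi_\varepsilon\wedge\overline{\Phi_\varepsilon}$ independently of $c$, so $\operatorname{Vol}(D_c)\le\int_{D_0}\Phi_\varepsilon\wedge\overline{\Phi_\varepsilon}$ for all small $c$. Taking $\limsup_{c\to 0}$ and then $\varepsilon\to 0$ completes the argument: by continuity of $\Psi_{D_0}$ and compactness of $D_0$, the forms $\Phi_\varepsilon$ converge uniformly to $\Psi_{D_0}$, whence $\int_{D_0}\Phi_\varepsilon\wedge\overline{\Phi_\varepsilon}\to\int_{D_0}\Psi_{D_0}\wedge\overline{\Psi_{D_0}}=\operatorname{Vol}(D_0)$, giving $\limsup_{c\to 0}\operatorname{Vol}(D_c)\le\operatorname{Vol}(D_0)$, which is exactly the asserted upper semicontinuity.

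The main obstacle, and the reason the corollary is only invoked where it is, is the continuity hypothesis on $\Psi_{D_0}$ required by Proposition~\ref{prop:upper-semicontinuity}: for a general fiber the Kobayashi--Eisenman pseudovolume need not be continuous, so the estimate cannot be applied blindly. This is precisely circumvented by taking the central fiber to be an elliptic K3 surface with a section, where $\Psi_{D_0}\equiv 0$ is trivially continuous. The remaining points---that $D_c\subset U$ and that $\pi_c$ is an orientation-preserving diffeomorphism for $c$ near $0$, the monotonicity of the pseudonorm-to-volume passage, and the uniform convergence $\Phi_\varepsilon\to\Psi_{D_0}$ on the compact fiber---are routine consequences of properness and smoothness of the family together with compactness of the fiber.
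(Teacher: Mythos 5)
Your proposal is correct and takes essentially the same route as the paper: both deduce the inequality $\limsup_{t_i\to t_0}\operatorname{Vol}(M_{t_i})\leq \operatorname{Vol}(M_{t_0})$ from Zaidenberg's pointwise estimate (Proposition~\ref{prop:upper-semicontinuity}), anchored at a central fiber where $\Psi_{D_0}$ is continuous (in the case of interest $\Psi_{D_0}\equiv 0$ for an elliptic K3 surface with a section). The paper phrases the argument as a proof by contradiction and leaves the integration and change-of-variables step implicit, whereas you carry it out directly and note the continuity caveat explicitly; this is a difference of exposition, not of method.
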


The proof is based on an adjustment of an argument of L. Kamenova, S. Lu, and M. Verbitsky \cite{MR3263959} to our setting. 

\begin{proof}
	We need to show that $\operatorname{Vol}\left(M_{t}\right)$ is upper semicontinuous with respect to $t$ for a family as given above, i.e. for all $t_{0} \in T$ and sequences $\left(t_{i}\right)$ converging to $t_{0}$,
	$$
	\limsup _{t_{i} \rightarrow t_{0}} \operatorname{Vol}\left(M_{t_{i}}\right) \leqslant \operatorname{Vol}\left(M_{t_{0}}\right).
	$$
	If the inequality is false, then after replacing the sequence $\left(t_{i}\right)$ by a subsequence there is an $\varepsilon>0$ such that $\operatorname{Vol}\left(M_{t_{i}}\right)>\operatorname{Vol}\left(M_{t_{0}}\right)+\varepsilon$ for all $i$. Then by Proposition~\ref{prop:upper-semicontinuity},
	$$
	\operatorname{Vol}\left(M_{t_{0}}\right)+\varepsilon \leqslant \limsup _{i \rightarrow \infty}\operatorname{Vol}\left(M_{t_i}\right)=\limsup _{i \rightarrow \infty}\int_{M_{t_i}} \Psi_{M_{t_i}}\wedge \overline{\Psi_{M_{t_i}}}  \leqslant \int_{M_{t_0}} \Psi_{M_{t_0}}\wedge \overline{\Psi_{M_{t_0}}}=\operatorname{Vol}\left(M_{t_{0}}\right),
	$$
	which is a contradiction.
\end{proof}

\begin{rema}
	In general, the Kobayashi-Eisenman pseudovolume does not
satisfy the lower-semicontinuity under deformation of complex
structures. i.e., the ``jumping phenomenon" under deformation. For such an example, \cite[Proposition 9.7]{MR0776396}.
\end{rema}

\subsection{Teichm\"uller spaces and Ergodicity}
We summarize the definition of the Teichmüller space of hyperk\"ahler manifolds, following \cite{MR3413979}:

\begin{defi}\cite[Definition 1.4, 1.6]{MR3413979}
	Let $M$ be a compact complex manifold and
$\operatorname{Diff}_{0}(M)$ a connected component of its diffeomorphism
group (the group of isotopies). Denote by Comp the space of complex
structures on $M$, equipped with a structure of Fr\'echet manifold. We let Teich $:=\operatorname{Comp} / \operatorname{Diff}_{0}(M)$ and call it the Teichmüller space of $M$.  Let $\operatorname{Diff}(M)$ be the group of orientable diffeomorphisms of a complex manifold $M$. Consider the mapping class group
	$$
	\Gamma:=\operatorname{Diff}(M) / \operatorname{Diff}_{0}(M)
	$$	
	acting on Teich. The quotient Comp / Diff $=$ Teich $/ \Gamma$ is called the moduli space of complex structures on $M$. The set Comp / Diff corresponds bijectively to the set of isomorphism classes of complex structures.
\end{defi}

\begin{defi}\cite[Definition 1.17]{MR3413979}
	Let $M$ be a complex manifold, Teich its Teichmüller space, and $I \in$ Teich a point. Consider the set $Z_{I} \subset$ Teich of all $I^{\prime} \in$ Teich such that $(M, I)$ is biholomorphic to $\left(M, I^{\prime}\right)$. Clearly, $Z_{I}=\Gamma \cdot I$ is the orbit of I. A complex structure is called ergodic if the corresponding orbit $Z_{I}$ is dense in Teich.
\end{defi}

For the proof of Corollary~\ref{cor:vanishing-k3}, we use the following theorem: 

\begin{theo}\cite[Theorem 1.16, 4.11]{MR3413979}\label{theorem:ergodicity}
	Let $M$ be a maximal holonomy hyperk\"ahler manifold (which
includes K3 surfaces) or a
compact complex torus of dimension $\geq 2$, and $I$ a complex structure
on $M$. Then $I$ is non-ergodic iff the Neron-Severi lattice of $(M,I)$
has maximal rank (i.e., the Picard number of $(M,I)$ is maximal). 
\end{theo}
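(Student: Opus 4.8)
The plan is to translate the ergodicity of a complex structure into a density statement for an arithmetic group acting on a homogeneous space, and then to invoke the ergodic theory of such actions (Moore's ergodicity theorem together with Ratner's orbit-closure theorem).

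First I would use the global Torelli theorem for maximal holonomy hyperk\"ahler manifolds (and the corresponding classical statement for tori) to identify, via the period map, each connected component of $\mathrm{Teich}$ with an open subset of the period domain
$$\mathrm{Per} := \{\, l \in \mathbb{P}(H^2(M,\mathbb{C})) : q(l,l)=0,\ q(l,\bar{l})>0 \,\},$$
where $q$ is the Beauville--Bogomolov--Fujiki form (the intersection form in the K3 case). Under this identification the period of $(M,I)$ is the line $l=\mathbb{C}\,\omega_I$ spanned by the holomorphic form, its associated positive two-plane is $P=\langle \operatorname{Re}\omega_I,\operatorname{Im}\omega_I\rangle_{\mathbb{R}}$, and the N\'eron--Severi lattice is $P^{\perp}\cap H^2(M,\mathbb{Z})$; consequently the Picard number is maximal exactly when $P$ (equivalently $P^{\perp}$) is defined over $\mathbb{Q}$. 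The domain $\mathrm{Per}$ is a homogeneous space $G/H$ for $G=SO^{+}(3,b_2-3)$ and $H=SO(2)\times SO(1,b_2-3)$, and the mapping class group $\Gamma$ acts through a finite-index, hence arithmetic, subgroup of $O(H^2(M,\mathbb{Z}),q)$; this finiteness is itself part of the global Torelli package. The theorem thus reduces to the assertion that the orbit $\Gamma\cdot l$ is dense in $\mathrm{Per}$ if and only if the two-plane $P$ is irrational.

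Next I would pass to the dual homogeneous space: density of $\Gamma\cdot l$ in $G/H$ is equivalent to density of the corresponding $H$-orbit in the finite-volume quotient $\Gamma\backslash G$. The non-compact simple factor $SO(1,b_2-3)\subseteq H$ is generated by unipotent one-parameter subgroups, so Ratner's orbit-closure theorem governs the closures of its orbits, while Moore's ergodicity theorem shows this factor acts ergodically on $\Gamma\backslash G$ because $\Gamma$ is a lattice in the simple group $G$. Ergodicity yields density of almost every orbit for free; the genuine content is to determine precisely which orbits fail to be dense and to recognize these as exactly the ones coming from rational period planes.

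The main obstacle is this last matching step. For the direction ``maximal Picard number $\Rightarrow$ non-ergodic'', when $P$ is rational the stabilizer of $l$ in $G$ is defined over $\mathbb{Q}$ and $\Gamma\cap\operatorname{Stab}_G(l)$ is an arithmetic subgroup of it; by the closed-orbit criterion in homogeneous dynamics (Borel--Harish-Chandra) the orbit $\Gamma\cdot l$ is then discrete in $\mathrm{Per}$, and a discrete subset of the positive-dimensional manifold $\mathrm{Per}$ cannot be dense, so $I$ is non-ergodic. For the converse I would argue that when $P$ is irrational the stabilizer carries no rational obstruction, so Ratner's classification forces the closure of the associated $H$-orbit to be all of $\Gamma\backslash G$, giving density and hence ergodicity. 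Carrying out this dichotomy rigorously---in particular verifying that rationality of the period plane is exactly the dynamical condition separating closed from dense orbits, and handling the compact factor $SO(2)$ together with the several connected components of $\mathrm{Teich}$---is where the real work lies.
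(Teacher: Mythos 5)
The first thing to say is that the paper itself contains no proof of this statement: it is imported as a black box from Verbitsky \cite{MR3413979} (Theorems 1.16 and 4.11) and used only in the proof of Corollary~\ref{cor:vanishing-k3}. So the comparison must be with Verbitsky's own argument, and your outline does track it faithfully: global Torelli plus arithmeticity of the mapping class group to convert ergodicity of $I$ into density of a $\Gamma$-orbit in the period domain $G/H$, $G=SO^{+}(3,b_2-3)$, $H=SO(2)\times SO^{+}(1,b_2-3)$; duality between $\Gamma$-orbits on $G/H$ and $H$-orbits on $\Gamma\backslash G$; Moore's ergodicity theorem and Ratner's orbit-closure theorem; and the identification of non-dense orbits with rational period planes, i.e.\ with maximal Picard number. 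The direction ``rational plane $\Rightarrow$ non-ergodic'' is also essentially complete as you state it (Borel--Harish-Chandra gives a closed orbit; a closed countable orbit in a connected positive-dimensional manifold is discrete by Baire category, hence non-dense).

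The genuine gap is the converse, and it is exactly the step you yourself defer as ``where the real work lies'' --- but that step \emph{is} the theorem. Ratner's theorem only says that the closure of the orbit of the unipotent-generated subgroup $H'=SO^{+}(1,b_2-3)$ is $S\cdot x$ for some closed connected intermediate group $H'\subseteq S\subseteq G$ supporting a finite invariant measure; to finish one must (i) classify these intermediate subgroups, in effect proving that $H'$ is maximal among proper connected Lie subgroups of $G$ so that $S$ is either essentially $H$ or all of $G$, (ii) show that a closed $H$-orbit with finite invariant measure forces a $\mathbb{Q}$-structure on the stabilizer and hence rationality of the plane $P$ (the converse of Borel--Harish-Chandra, via the Mozes--Shah/Dani--Margulis circle of ideas or Verbitsky's direct argument), and (iii) handle the compact $SO(2)$ factor, to which Ratner does not apply. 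None of this is routine, and without it the proposal proves only the easy implication. Two further points: the global Torelli theorem identifies each component of the \emph{Hausdorff reduction} of $\mathrm{Teich}$ with the entire period domain, not an open subset, and transferring density across the non-Hausdorff points of $\mathrm{Teich}$ requires an argument; and your sketch covers only the hyperk\"ahler case, whereas the statement also asserts the result for compact complex tori of dimension $\geq 2$, whose period domain is a different homogeneous space (for $SL(2n,\mathbb{R})$) and which Verbitsky treats by a separate, parallel argument.
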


\begin{prop}\label{prop:vanishing-ergodic}
	Let $M$ be a complex manifold with vanishing Kobayashi-Eisenman pseudovolume. Then the volume of the Kobayashi-Eisenman pseudovolume $\Psi_M$ defined in Corollary~\ref{cor:upper-semicontinuity-volume} vanishes for all ergodic complex structures in the same deformation class.
\end{prop}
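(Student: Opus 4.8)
The plan is to play the vanishing at a single complex structure off against the density of the orbit of an ergodic structure, using three ingredients: the biholomorphic invariance of the Kobayashi--Eisenman volume, ergodicity, and the upper semicontinuity of Corollary~\ref{cor:upper-semicontinuity-volume}. The slogan is that an upper semicontinuous function which is constant $c$ on a dense set cannot exceed its value at any point of the closure where it is smaller, so approximating a zero-volume structure by a dense orbit forces $c=0$.

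First I would record that $\operatorname{Vol}$ is constant along $\Gamma$-orbits in $\operatorname{Teich}$. If $I'=\gamma\cdot I$ for $\gamma\in\Gamma$, then $(M,I)$ and $(M,I')$ are biholomorphic, and applying Lemma~\ref{lemma:decreasing} to this biholomorphism together with its inverse forces equality of the two pseudovolume forms under the induced identification; hence $\operatorname{Vol}(M,I)=\operatorname{Vol}(M,I')$, and $\operatorname{Vol}$ is constant on each orbit $Z_I=\Gamma\cdot I$. Next, writing $I_0$ for the complex structure of the given $M$, the hypothesis $\Psi_M\equiv 0$ yields $\operatorname{Vol}(M,I_0)=\int_M \Psi_M\wedge\overline{\Psi_M}=0$; moreover $\Psi_{(M,I_0)}\equiv 0$ is continuous, so $I_0$ is precisely a point of $\operatorname{Teich}$ at which the continuity hypothesis of Proposition~\ref{prop:upper-semicontinuity}, and hence the conclusion of Corollary~\ref{cor:upper-semicontinuity-volume}, is available with $I_0$ as limit fiber.

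Now let $I$ be any ergodic complex structure in the deformation class of $M$ and suppose, for contradiction, that $c:=\operatorname{Vol}(M,I)>0$. By the first step $\operatorname{Vol}\equiv c$ on $Z_I$, and by ergodicity (Theorem~\ref{theorem:ergodicity}) the orbit $Z_I$ is dense in $\operatorname{Teich}$; in particular I may choose a sequence $I_k=\gamma_k\cdot I\in Z_I$ with $I_k\to I_0$. Applying the upper semicontinuity of $\operatorname{Vol}$ at the limit fiber $I_0$ then gives
\[
c=\limsup_{k\to\infty}\operatorname{Vol}(M,I_k)\le \operatorname{Vol}(M,I_0)=0,
\]
which contradicts $c>0$. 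Hence $\operatorname{Vol}(M,I)=0$ for every ergodic $I$ in the deformation class, as claimed.

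The step I expect to be the main obstacle is the passage from abstract convergence $I_k\to I_0$ in the (non-Hausdorff) Teichm\"uller space to an honest proper smooth degeneration to which Corollary~\ref{cor:upper-semicontinuity-volume} applies: one must realize the convergent sequence inside a family with central fiber $(M,I_0)$. Here the local Torelli theorem is what saves the argument, since it makes the period map a local isomorphism and models a neighborhood of $I_0$ on a smooth domain carrying the universal family; the non-Hausdorff points are harmless because we only require convergence to the single fiber $I_0$. The reason this limit fiber must be the \emph{vanishing} structure, rather than an arbitrary point, is exactly the continuity requirement on $\Psi_{D_0}$ in Proposition~\ref{prop:upper-semicontinuity}, which here is satisfied trivially since $\Psi_{(M,I_0)}\equiv 0$; anchoring the semicontinuity estimate at $I_0$ is therefore the crux of the whole argument.
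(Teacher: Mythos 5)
Your proposal is correct and takes essentially the same approach as the paper: $\operatorname{Vol}$ is invariant under biholomorphism, hence constant on the dense orbit $Z_I$ of an ergodic structure, and upper semicontinuity (Corollary~\ref{cor:upper-semicontinuity-volume}) applied at the structure $I_0$ with $\Psi_{(M,I_0)}\equiv 0$ forces that constant value to be $0$; your argument by contradiction is just a rephrasing of this. If anything, your write-up is tidier than the paper's (whose final displayed inequality is stated in a garbled form), and you are more explicit that the continuity hypothesis of Proposition~\ref{prop:upper-semicontinuity} is what anchors the semicontinuity estimate at $I_0$, a point the paper only addresses in the discussion preceding Corollary~\ref{cor:upper-semicontinuity-volume}.
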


\begin{proof}
	Let Vol : Teich $\longrightarrow \mathbb{R}^{\geqslant 0}$ map a complex structure $I$ to the volume of the Kobayashi-Eisenman pseudovolume on $(M, I)$. By Corollary~\ref{cor:upper-semicontinuity-volume}, this function is upper semi-continuous. Let $I$ be an ergodic complex structure. The set of points $I^{\prime} \in$ Teich such that $\left(M, I^{\prime}\right)$ is biholomorphic to $(M, I)$ is dense, because $I$ is ergodic. By upper semi-continuity, $0=\operatorname{Vol}(I) \geqslant$ $\sup _{I^{\prime} \in \text{Teich}} \operatorname{Vol}\left(I^{\prime}\right)$.
\end{proof}

For the following Corollary, we adapt the similar argument of \cite[Corollary 2.2]{MR3263959}.

\begin{coro}\label{cor:vanishing-k3}
	Let $X$ be a complex K3 surface. Then the Kobayashi-Eisenman pseudovolume on $X$ vanishes almost everywhere (with respect to the associated volume form of any hermitian metric on $X$). 
\end{coro}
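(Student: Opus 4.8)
The plan is to split complex K3 surfaces into two classes according to whether their complex structure is \emph{ergodic}, and to dispose of the two classes by genuinely different mechanisms. The reason a single mechanism does not suffice is precisely the phenomenon flagged in the Remark above: upper semicontinuity of the volume can only let $\operatorname{Vol}$ jump \emph{upward} at special structures, so it cannot be used to propagate vanishing from generic structures to the special ones. Thus the ergodic case will be handled by semicontinuity-based density, while the non-ergodic (special) case must be attacked directly.

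First I would treat the ergodic structures. All complex K3 surfaces share one underlying differentiable manifold $M$ (they are mutually diffeomorphic), so they all lie in a single deformation class, and this class contains elliptic K3 surfaces with a section. By Proposition~\ref{prop:vanishing1} such a surface has vanishing Kobayashi-Eisenman pseudovolume, so it furnishes the input manifold required by Proposition~\ref{prop:vanishing-ergodic}. That proposition—built on the upper semicontinuity of Corollary~\ref{cor:upper-semicontinuity-volume}—then yields $\operatorname{Vol}(M,I)=0$ for every ergodic complex structure $I$ in the class, i.e.\ for every ergodic complex K3 surface.

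Next I would handle the non-ergodic structures, which I expect to be the main obstacle. By Theorem~\ref{theorem:ergodicity}, a non-ergodic complex structure on a K3 surface is exactly one of maximal Picard number, namely $\rho=20$. Here semicontinuity is useless, so instead I would show that such an $X$ is \emph{itself} an elliptic K3 surface with a section. Its Néron-Severi lattice $\operatorname{NS}(X)$ is an even lattice of signature $(1,19)$ whose discriminant group is isometric to that of the rank-$2$ transcendental lattice and is therefore generated by at most $2$ elements; since the rank $20$ vastly exceeds this number of generators, Nikulin's lattice theory provides a primitive embedding of the hyperbolic plane $U$, and unimodularity of $U$ makes this embedding split off orthogonally. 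By the criterion for elliptic fibrations with a section recalled in Section~3, the existence of such an embedding means $X$ is an elliptic K3 surface with a section, so Proposition~\ref{prop:vanishing1} applies directly and gives $\operatorname{Vol}(X)=0$. (If one wishes to avoid the splitting criterion, the Shioda-Inose structure exhibits every Picard-rank-$20$ K3 surface as carrying an elliptic fibration with a section, giving the same conclusion.)

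Finally I would combine the two cases: every complex K3 surface is ergodic or non-ergodic, and in both cases $\operatorname{Vol}(X)=\int_X \Psi_X\wedge\overline{\Psi_X}=0$. Because $\Psi_X\wedge\overline{\Psi_X}$ is a nonnegative multiple of the volume form of any Hermitian metric on $X$, vanishing of this integral forces $\Psi_X=0$ almost everywhere, which is the assertion. The one step demanding genuine care is the lattice-theoretic input in the non-ergodic case, since it is here that the general upper-semicontinuity machinery provably fails.
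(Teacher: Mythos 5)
Your proof is correct, and it uses the same three core ingredients as the paper---Proposition~\ref{prop:vanishing1}, Theorem~\ref{theorem:ergodicity}, and the ergodicity-plus-semicontinuity mechanism of Proposition~\ref{prop:vanishing-ergodic}---but organized around the mirror-image case split. The paper divides K3 surfaces according to whether they admit an elliptic fibration with a section: the ones that do are handled directly, and for the ones that do not, the paper invokes the cited fact that Picard number $\rho \geq 12$ forces such a fibration, so these surfaces have $\rho < 12 \neq 20$, are ergodic by Theorem~\ref{theorem:ergodicity}, and are then handled by Proposition~\ref{prop:vanishing-ergodic}. You instead divide by ergodicity: ergodic structures go through Proposition~\ref{prop:vanishing-ergodic}, and non-ergodic ones have $\rho = 20$, which you show implies the existence of an embedding $U \hookrightarrow NS(X)$ via Nikulin's splitting criterion (signature $(1,19)$, discriminant group generated by at most $2$ elements since it matches that of the rank-$2$ transcendental lattice, and $\ell + 2 \leq 20$), whence $X$ is itself elliptic with a section and Proposition~\ref{prop:vanishing1} applies. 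The trade-off: the paper's bridging fact ($\rho \geq 12$ suffices) is stronger but simply cited from the literature, while yours is only the $\rho=20$ special case and you prove it self-contained from lattice theory (or via Shioda--Inose). A further merit of your write-up is that it makes explicit why the density of elliptic K3 surfaces with a section (Proposition~\ref{prop:dense_subset}) combined with upper semicontinuity is \emph{not} by itself sufficient---semicontinuity only bounds the volume of nearby structures by that of the limit structure, never the reverse---so that ergodicity, i.e.\ dense orbits accumulating at a vanishing structure, is genuinely what drives Proposition~\ref{prop:vanishing-ergodic}; this point is left implicit in the paper (and the introduction's summary even obscures it).
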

\begin{proof}
For any elliptic K3 surface $X'$ with a section, $\Psi_{X'}\equiv 0$ follows from Proposition~\ref{prop:non-constant-map} and the decreasing property of the Kobayashi-Eisenman pseudovolume. 	
	
For general K3 surfaces $X$, we use the following well-known fact: when the Picard number of a K3 surface is greater or equal to $5$, then such a K3 surface admits an elliptic fibration, and when the Picard number is greater or equal to $12$, then such a fibration exists having a
section (see for instance \cite[Chapter 11.1]{MR3586372}). So, if the K3 surface $(X,I)$ does not have an elliptic fibration with section, then
at least the Picard number cannot be $20$ (the maximal value). By Theorem~\ref{theorem:ergodicity}, $I$ is an ergodic complex structure.
Since any two complex structures of K3 surfaces are deformation equivalent to each other, we can find an ergodic complex structure which is an elliptic K3 surface with a section. Therefore  for general K3 surfaces, Proposition~\ref{prop:vanishing-ergodic} implies $\operatorname{Vol}=\int_X \Psi_X \wedge \overline{\Psi_X} =0$, and thus $\Psi_X=0$ almost everywhere.
\end{proof}

One another vanishing consequence is on the Hilbert schemes of points on any complex K3 surface. Let $S$ be a smooth quasi-projective algebraic variety over a field $k$, denotes the symmetric group $\Sigma_{n}$, and symmetric powers $\operatorname{Sym}^{n}(S):=S^{n} / \Sigma_{n}$, which is the moduli space of effective $0$-cycles on $S$ that we only record the multiplicity when points come together. When $X=S$ is a surface, it is well-known that the Hilbert scheme $Hilb^{n}(S)$, which is the moduli of 0-dimensional subschemes of length $n$ in $S$, is a smooth, irreducible variety which is a resolution of singularities of $\operatorname{Sym}^{n}(S)$.

\begin{coro}\label{cor:vanishing-Hilbertscheme}

	Let $Hilb^{n}(X)$ be a Hilbert schemes of points on any complex K3 surface $X$ for any $n$. Then the Kobayashi-Eisenman pseudovolume on $Hilb^{n}(X)$ vanishes almost everywhere.
\end{coro}
\begin{proof}
	For any elliptic K3 surface $X'$ with a section, from the proof of Proposition~\ref{prop:non-constant-map}, we have a non-constant holomorphic function $F : 	\mathbb{C}\times R_{X'} \rightarrow \overline{X'}$. Consequently, from the fact that $Hilb^{n}(X')$ is a resolution of singularities of $\operatorname{Sym}^{n}(X')$, we have a non-constant holomorphic map $f : \mathbb{C}\times Z \rightarrow Hilb^{n}(X')$, where $Z$ is a complex manifold of dimension one less than the dimension of $Hilb^{n}(X')$. By Lemma~\ref{lemma:decreasing},  the Kobayashi-Eisenman pseudovolume vanishes almost everywhere.
	
	For general K3 surfaces $X$, since $Hilb^{n}(X)$ is deformation equivalent to $Hilb^{n}(X')$ with some elliptic K3 surface $X'$ with a section, as the proof of Corollary~\ref{cor:vanishing-k3}, the result follows. 
\end{proof}

The following Corollary concerning Enriques surfaces is also proven in \cite{MR609557}. 

\begin{coro}
	Let $Y$ be a Enriques surface. Then the Kobayashi-Eisenman pseudovolume on $Y$ vanishes. 
\begin{proof}
Every Enriques surface $Y$ admits a complex K3 surface $X$ as a
holomorphic double covering $\phi : X \rightarrow Y$. 
The conclusion follows from Lemma~\ref{lemma:decreasing} and Corollary~\ref{cor:vanishing-k3}. 
\end{proof}

\end{coro}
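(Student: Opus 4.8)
The plan is to exploit the fact that every Enriques surface is covered by a K3 surface and then transport the vanishing established in Corollary~\ref{cor:vanishing-k3} downward along this covering. First I would recall the classical structure theorem: an Enriques surface $Y$ is precisely the quotient of a K3 surface $X$ by a fixed-point-free holomorphic involution $\iota$, so that the projection $\phi : X \rightarrow Y$ is an unramified (\'etale) holomorphic double cover. In particular $\phi$ is a local biholomorphism, a point that will matter at the descent step. This realization is what furnishes the holomorphic map needed to invoke the decreasing property.

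With $\phi$ in hand, the next step is to apply Lemma~\ref{lemma:decreasing} to the holomorphic map $\phi : X \rightarrow Y$, which yields the pointwise inequality of pseudovolume forms
\begin{equation*}
\phi^{*}\Psi_{Y} \leq \Psi_{X}.
\end{equation*}
By Corollary~\ref{cor:vanishing-k3}, the right-hand side vanishes almost everywhere on $X$ with respect to any Hermitian volume form. Hence $\phi^{*}\Psi_{Y}=0$ almost everywhere on $X$ as well, since a nonnegative quantity dominated by zero must itself be zero.

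The final step is to descend this vanishing from $X$ to $Y$. Here I would use that $\phi$ is a local biholomorphism: at each point $x \in X$ the differential $d\phi_{x}$ is an isomorphism, so the pullback of the $(2,0)$-part encoding $\Psi_{Y}$ vanishes at $x$ exactly when $\Psi_{Y}$ vanishes at $\phi(x)$. Because $\phi$ is a surjective two-to-one \'etale map, it carries sets of full measure on $X$ to sets of full measure on $Y$ (and preimages of null sets are null). Consequently $\phi^{*}\Psi_{Y}=0$ a.e.\ on $X$ forces $\Psi_{Y}=0$ a.e.\ on $Y$, which is the asserted non-measure hyperbolicity. The only genuinely delicate point is this descent: one must confirm that the almost-everywhere vanishing genuinely transfers through the covering rather than being an artifact of the measure-zero fibers, and the \'etale (fixed-point-free) nature of $\phi$ is exactly what guarantees this. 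Everything else is a direct concatenation of Lemma~\ref{lemma:decreasing} with Corollary~\ref{cor:vanishing-k3}.
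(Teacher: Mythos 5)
Your proposal is correct and follows essentially the same route as the paper: realize $Y$ as the quotient of a K3 surface $X$ by a fixed-point-free involution, apply Lemma~\ref{lemma:decreasing} to the covering map $\phi : X \rightarrow Y$ to get $\phi^{*}\Psi_{Y}\leq \Psi_{X}$, and invoke Corollary~\ref{cor:vanishing-k3}. The only difference is that you spell out the descent step (using that $\phi$ is a surjective local biholomorphism, so almost-everywhere vanishing of $\phi^{*}\Psi_{Y}$ on $X$ forces almost-everywhere vanishing of $\Psi_{Y}$ on $Y$), which the paper leaves implicit; this is a worthwhile clarification but not a different argument.
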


	\bibliography{reference}
	
\end{document}